\newtheorem{Th}{Theorem}
\newtheorem{Lem}{Lemma}
\newtheorem{Rem}{Remark}
\newtheorem{Cor}{Corollary}
\begin{document}
\thispagestyle{empty}

\title[Stability and bifurcation phenomena in asymptotically Hamiltonian systems]
{Stability and bifurcation phenomena in asymptotically Hamiltonian systems}
\author{Oskar A. Sultanov}

\address{
Institute of Mathematics, Ufa Federal Research Centre, Russian Academy of Sciences,
\newline
112, Chernyshevsky street, Ufa, Russia, 450008}
\email{oasultanov@gmail.com}


\maketitle
{\small

{\small
\begin{quote}
\noindent{\bf Abstract.}
The influence of time-dependent perturbations on an autonomous Hamiltonian system with an equilibrium of center type is considered.
It is assumed that the perturbations decay at infinity in time and vanish at the equilibrium of the unperturbed system. In this case the stability and the long-term behaviour of trajectories depend on nonlinear and non-autonomous terms of the equations. The paper investigates bifurcations associated with a change of Lyapunov stability of the equilibrium and the emergence of new attracting or repelling states in the perturbed asymptotically autonomous system. The dependence of bifurcations on the structure of decaying perturbations is discussed.

\medskip

\noindent{\bf Keywords: }{ non-autonomous systems, perturbations, asymptotics, stability, bifurcation, Lyapunov function}

\medskip
\noindent{\bf Mathematics Subject Classification: }{34C23, 34D10, 34D20, 37J65}
\end{quote}
}

\section{Introduction}

The influence of perturbations on the stability of solutions is a classical problem in the qualitative theory of differential equations. For autonomous systems, the solution of such a problem is effectively covered by the theory of stability and bifurcations~\cite{GH83,GL94,Hans07}. This paper is devoted to non-autonomous perturbations such that the perturbed system is asymptotically autonomous. Asymptotically autonomous systems were first considered in~\cite{Markus56}, where the relations between the solutions of the complete system and the solutions of the corresponding limiting autonomous system were discussed. A special class of asymptotically autonomous systems on the plane and conditions that guarantee the stability and almost periodicity of solutions were investigated in~\cite{WongBurton65}. A more wide class of systems on the plane was considered in~\cite{Grimmer69}, where the almost periodic solutions were approximated by solutions of the corresponding limiting systems. Note that under some conditions, the solutions of a complete system have the same asymptotic behavior as the solutions of the limiting system (see, for example,~\cite{Theim92}). However, this is not true in general. Several examples of non-autonomous systems whose solutions behave completely differently than the solutions of the corresponding limiting systems were studied in~\cite{Theim94}.

Bifurcations in non-autonomous systems have recently been discussed in several papers. In particular, scalar differential equations with time-dependent coefficients were considered in~\cite{LRS02}, where bifurcations are associated with the change of a pullback stability and the appearance of new stable states. Similar equations were studied in~\cite{KS05}, where the bifurcation was understood as a change in the structure of the pullback attractor. Bifurcations as a change in the structure of the domain of attraction were discussed for asymptotically autonomous equations in~\cite{Rassmusen08}, where some conditions ensuring the transfer of bifurcations in limiting equations to complete equations were described. The elements of general theory for non-autonomous systems are contained in~\cite{CP10}, where some particular bounded solutions were considered as bifurcating objects and the bifurcation was understood as a branching of solutions.

The present paper considers a class of asymptotically Hamiltonian systems with the equilibrium and investigates the effects of decaying time-dependent perturbations on the stability and bifurcations of solutions. To the best of our knowledge, such problems have not been thoroughly studied.

The paper is organized as follows. In section~\ref{sec1}, the mathematical formulation of the problem is given and the class of non-autonomous perturbations is described. The proposed method of stability and bifurcation analysis is based on a change of variables associated with a Lyapunov function for a complete asymptotically autonomous system. The construction of this transformation is described in section~\ref{sec2}. Section~\ref{sec3} is devoted to bifurcations associated with a change of the stability of the equilibrium. Bifurcations associated with limit cycles are discussed in section~\ref{sec32}. The results of sections~\ref{sec3} and~\ref{sec32} are applied in section~\ref{sec4} for a description of bifurcations in the complete system under various restrictions on the perturbations. The paper concludes with a brief discussion of the results obtained.

\section{Problem statement}
\label{sec1}

Consider the system of two differential equations:
\begin{gather}
\label{FulSys}
\frac{dx}{dt}=\partial_y H(x,y,t), \quad \frac{dy}{dt}=-\partial_x H(x,y,t)+F(x,y,t), \quad t>0.
\end{gather}
It is assumed that the functions $H(x,y,t)$ and $F(x,y,t)$ are infinitely differentiable and for every compact $D\in\mathbb R^2$ $H(x,y,t)\to H_0(x,y)$ and $F(x,y,t)\to 0$ as $t\to\infty$ for all $(x,y)\in D$. The limiting autonomous system with the Hamiltonian $H_0(x,y)$ is assumed to have the isolated fixed point $(0,0)$ of center type. Without loss of generality, it is assumed that
\begin{gather}\label{H0}
    H_0(x,y)=\frac{x^2+y^2}{2}+\mathcal O(r^3), \quad r=\sqrt{x^2+y^2}\to 0.
\end{gather}
It is also assumed that the level lines $\{(x,y)\in\mathbb R^2: H_0(x,y)=E\}$ define a family of closed curves on the phase space $(x,y)$ parameterized by the parameter $E$ for all $E\in(0,E_0]$, $E_0={\hbox{\rm const}}$.

Non-autonomous perturbations of the limiting system are described by the functions with power-law asymptotics:
\begin{gather}
\label{HF}
H(x,y,t)-H_0(x,y)=\sum_{k=1}^\infty t^{-\frac kq} H_k(x,y), \quad F(x,y,t)=\sum_{k=1}^\infty t^{-\frac kq} F_k(x,y), \quad t\to\infty, \quad q\in \mathbb Z_+.
\end{gather}
It is assumed that the perturbations preserve the fixed point $(0,0)$:
\begin{gather*}
\partial_x H(0,0,t)\equiv 0, \quad \partial_y H(0,0,t)\equiv 0, \quad F(0,0,t)\equiv 0.
\end{gather*}
The structure of the perturbations can be more complicated, for example, asymptotic series \eqref{HF} can differ from power series, or the coefficients of asymptotics can explicitly depend on $t$. Such perturbations, however, are not considered in the present paper. Note that the Painlev\'{e} equations~\cite{IKNF}, autoresonance models~\cite{OSLK13,OS16} and  synchronization models~\cite{PRK02,LK14} are reduced to non-autonomous systems of the form \eqref{FulSys}.

Our goal is to describe possible asymptotic regimes in the perturbed system and to reveal the role of decaying perturbations in the corresponding bifurcations. Here, the bifurcations are associated with a change of Lyapunov stability of the equilibrium and the emergence of new attracting or repelling states.

Let us note that the decaying perturbations do affect the stability of the system. A simple example is given by the following equation:
\begin{gather*}
\frac{d^2x}{dt^2}+x=\gamma t^{-\kappa} \frac{dx}{dt}, \quad \gamma,\kappa\in\mathbb R, \quad \gamma\neq 0,\ \ \kappa>0.
\end{gather*}
This equation in the variables $x,y=\dot x$ has the form \eqref{FulSys} with $H(x,y,t)\equiv H_0(x,y)\equiv (x^2+y^2)/2$ and $F(x,y,t)\equiv \gamma t^{-\kappa} y$. It can easily be checked that the unperturbed autonomous equation ($\gamma=0$) has the following general solution: $x(t;a,\varphi)=a \cos(\varphi+t)$. The long-term asymptotics for a two-parameter family of solutions to the perturbed equation ($\gamma\neq 0$) is constructed with using WKB approximations~\cite{Vazov}:
\begin{align*}
&x(t;a,\varphi)=a \big[\cos(\varphi+t)+\mathcal O(t^{1-\kappa})\big], \quad \kappa>1;\\
&x(t;a,\varphi)=a t^{\gamma/2} \big[\cos(\varphi+t)+\mathcal O(t^{-1})\big], \quad \kappa=1;\\
&x(t;a,\varphi)=a \exp\Big(\frac{\gamma t^{1-\kappa}}{1-\kappa}\Big) \Big\{\cos\big(\varphi+t+\mathcal O(t^{1-2\kappa})+\mathcal O(\log t)\big)+\mathcal O(t^{-\kappa})\Big\}, \quad \kappa<1,
\end{align*}
where $a,\varphi\in\mathbb R$ are arbitrary parameters. It follows that the stability of the trivial solution $x(t)\equiv 0$ or the fixed point $(0,0)$ depends on the parameters $\gamma$ and $\kappa$. In particular, if $\kappa>1$,  the fixed point is marginally stable. In this case, the solutions of the non-autonomous equation have the same behaviour as the solutions of the limiting equation. The fixed point becomes attracting if $\gamma<0$ (polynomially stable when $\kappa=1$ and exponentially stable when $0<\kappa<1$), and loses stability if $\gamma>0$. In the general case, the long-term asymptotics for solutions are obtained not so easily, and the stability of the equilibrium depends on nonlinear terms of equations.
The examples of nonlinear equations are contained in section~\ref{sec4}.

\section{Change of variables}
\label{sec2}

The proposed method of study of asymptotic regimes in system \eqref{FulSys} is based on the construction of appropriate Lyapunov functions. Recently, it was noted in~\cite{OS18,OS19,OS20} that such functions are effective in the asymptotic analysis of solutions to nonlinear non-autonomous systems. See also~\cite{Hapaev} for application of the second Lyapunov method to asymptotic analysis of equations with a small parameter. Here, a Lyapunov function is used as a new dependent variable. In this section, the construction of such function and the change of variables are presented in a form suitable for further bifurcation analysis of  system \eqref{FulSys}.

First, consider the limiting system
\begin{gather}
    \label{LimSys}
        \frac{dx}{dt}=\partial_y H_0(x,y), \quad \frac{dy}{dt}=-\partial_x H_0(x,y).
\end{gather}
To each level line $\{(x,y)\in\mathbb R^2: H_0(x,y)=E\}$, $E\in (0, E_0]$ there correspond a periodic solution
$x_0(t,E)$, $y_0(t,E)$ of system \eqref{LimSys} with a period $T(E)=2\pi/\omega(E)$, where $\omega(E)\neq 0$ for all $E\in[0,E_0]$ and  $\omega(E)=1+\mathcal O(E)$ as $E\to 0$. The value $E=0$ corresponds to the fixed point $(0,0)$.

Define auxiliary $2\pi$-periodic functions $X(\varphi,E)=x_0(\varphi/\omega(E),E)$ and $Y(\varphi,E)=y_0(\varphi/\omega(E),E)$, satisfying the system:
\begin{gather*}
    \omega(E)\frac{\partial X}{\partial \varphi}=\partial_Y H_0(X,Y), \quad
    \omega(E)\frac{\partial Y}{\partial \varphi}=-\partial_X H_0(X,Y).
\end{gather*}
These functions are used for rewriting system \eqref{FulSys} in the action-angle variables $(E,\varphi)$:
\begin{gather}
\label{exch1}
    x(t)=X (\varphi(t),E(t)), \quad y(t)=Y(\varphi(t),E(t)).
\end{gather}
From the identity $H_0(X(\varphi,E),Y(\varphi,E))\equiv E$ it follows that
\begin{gather*}
    \begin{vmatrix}
        \partial_\varphi X & \partial_E X\\
        \partial_\varphi Y& \partial_E Y
    \end{vmatrix} = \frac{1}{\omega(E)}\neq 0.
\end{gather*}
The last inequality guarantees the reversibility of transformation \eqref{exch1} for all $E\in (0,E_0)$ and $\varphi\in \mathbb R$. It can easily be checked that in new variables  $(E,\varphi)$ system \eqref{FulSys} takes the form:
\begin{gather}
\label{FulSys2}
    \frac{dE}{dt}=f(E,\varphi,t), \quad  \frac{d\varphi}{dt}=\omega(E)+g(E,\varphi,t),
\end{gather}
where
\begin{eqnarray*}
    f(E,\varphi,t) & \equiv &
    -\omega(E) \Big(\partial_\varphi  H\big(X(\varphi,E),Y(\varphi,E),t\big) - F\big(X(\varphi,E),Y(\varphi,E),t\big)\partial_\varphi X (\varphi,E)\Big), \\
    g(E,\varphi,t) & \equiv & \omega(E)\Big(\partial_E  H\big(X(\varphi,E),Y(\varphi,E),t\big) -1- F\big(X(\varphi,E),Y(\varphi,E),t\big)\partial_E X(\varphi,E)\Big)
\end{eqnarray*}
are $2\pi$-periodic functions with respect to $\varphi$. Since $(0,0)$ is the equilibrium of system \eqref{FulSys}, we have $f(0,\varphi,t)\equiv 0$. From \eqref{HF} it follows that
\begin{gather*}
    f(E,\varphi,t)=\sum_{k=1}^\infty t^{-\frac kq} f_k(E,\varphi), \quad
    g(E,\varphi,t)=\sum_{k=1}^\infty t^{-\frac kq} g_k(E,\varphi), \quad t\to\infty,
\end{gather*}
where
\begin{gather}\label{fg}
    \begin{split}
        f_k(E,\varphi)&\equiv -\omega(E) \Big(\partial_\varphi H_k\big(X(\varphi,E),Y(\varphi,E)\big)- F_k\big(X(\varphi,E),Y(\varphi,E)\big)\partial_\varphi X(\varphi,E)  \Big),
        \\
        g_k(E,\varphi) &\equiv \omega(E) \Big(\partial_E H_k\big(X(\varphi,E),Y(\varphi,E)\big)- F_k\big(X(\varphi,E),Y(\varphi,E)\big)\partial_E X(\varphi,E) \Big).
    \end{split}
\end{gather}

To simplify the first equation in \eqref{FulSys2}, we consider the transformation of the variable $E$ in the form:
\begin{gather}\label{VF}
    V_N(E,\varphi,t)=E+\sum_{k=1}^N t^{-\frac kq} v_k(E,\varphi),
\end{gather}
where the coefficients $v_k(E,\varphi)$ are chosen in such a way that the right-hand side of the equation for the new variable $v(t)\equiv V_N(E(t),\varphi(t),t)$ does not depend on $\varphi$ at least in the first terms of the asymptotics:
\begin{gather}
    \label{VEq}
    \frac{dv}{dt}=\sum_{k=1}^N t^{-\frac kq} \Lambda_k(v) + R_{N+1}(v,\varphi,t), \quad R_{N+1}(v,\varphi,t)=\mathcal O(t^{-\frac{N+1}{q}}), \quad t\to\infty.
\end{gather}
Under the transformation $(E,\varphi)\mapsto (v,\varphi)$ the form of the second equation in \eqref{FulSys2} changes slightly:
\begin{gather}
    \label{PhiEq}
    \frac{d\varphi}{dt}=\omega(v)+G_N(v,\varphi,t), \quad G_N(v,\varphi,t)=\sum_{k=1}^\infty t^{-\frac kq}  g_{N,k}(v,\varphi), \quad t\to\infty.
\end{gather}
Here, each function $g_{N,k}(v,\varphi)$ is $2\pi$-periodic with respect to $\varphi$ and is expressed through $v_1,\dots, v_k$ and $g_1,\dots,g_k$. For example,
\begin{eqnarray*}
  g_{N,1}(v,\varphi)&=&g_1(v,\varphi)-\omega'(v)v_1(v,\varphi), \\
  g_{N,2}(v,\varphi)&=&g_2(v,\varphi)-\omega'(v)\big(v_2(v,\varphi)-\partial_v v_1(v,\varphi) v_1(v,\varphi)\big) - \partial_v g_1(v,\varphi) v_1(v,\varphi)+\omega''(v)v_1^2(v,\varphi).
\end{eqnarray*}

Note that such a transformation is usually applied in the averaging of systems with a small parameter and is associated with a fast variable elimination~\cite{AKN06}. Here, $\varphi$ can serve as an analogue of a fast variable. However, the presence of a small parameter is not assumed in the system, and the terms ``fast'' and ``slow'' variables are not appropriate.

Let us move on to the calculation of the coefficients $v_k(E,\varphi)$. The total derivative of the function $V_N(E,\varphi,t)$ with respect to $t$ along the trajectories of system \eqref{FulSys2} has the following form
\begin{gather}\label{DFS}
    \begin{split}
        \frac{dV_N}{dt}\Big|_{\eqref{FulSys2}} & := \partial_t V_N (E,\varphi,t)+ f (E,\varphi,t)\partial_E V_N (E,\varphi,t)+\Big( \omega(E) +g(E,\varphi,t) \Big)\partial_\varphi V_N (E,\varphi,t)   \\
        & =  \sum_{k=1}^\infty t^{-\frac kq}  \Big(\omega(E) \partial_\varphi v_k(E,\varphi) + f_k(E,\varphi)-\frac{k-q}{q} v_{k-q}(E,\varphi)\Big) \\
        & + \sum_{k=2}^\infty t^{-\frac kq}\sum_{i+j=k} \Big(f_{j}(E,\varphi)\partial_E v_i (E,\varphi) + g_{j}(E,\varphi)\partial_\varphi v_i(E,\varphi) \Big),
    \end{split}
\end{gather}
where it is assumed that  $v_j(E,\varphi)\equiv 0$ for $j\leq 0$ and $j>N$.
Substituting \eqref{VF} into the right-hand side of \eqref{VEq} and the comparison of the result with \eqref{DFS} lead to the following chain of differential equations:
\begin{gather}\label{RSys}
    \omega(E) \partial_\varphi v_k =\Lambda_k(E) - f_k(E,\varphi)+Z_k(E,\varphi), \quad k=1,2,\dots,N,
\end{gather}
where each function $Z_k(E,\varphi)$ is expressed through $v_1,\dots, v_{k-1}$. In particular,
\begin{eqnarray*}
    Z_1&\equiv& 0,\\
    Z_2&\equiv& v_1  \partial_E \Lambda_1 -\big( f_1\partial_E v_1 +  g_1\partial_\varphi v_1\big),\\
    Z_3&\equiv& v_2\partial_E \Lambda_1+ v_1\partial_E\Lambda_2+\frac{1}{2} v_1^2 \partial^2_E\Lambda_1 -\sum_{i+j=3} \big( f_{j}\partial_E v_i +  g_{j}\partial_\varphi v_i\big),\\
    Z_k&\equiv &
    \sum_{
        \substack{
            j+\alpha_1+2\alpha_2+\dots+i \alpha_i=k\\
            \alpha_1+\dots+\alpha_i=m\geq 1
                }
            } C_{i,\alpha,m}   v_1^{\alpha_1}v_2^{ \alpha_2}\dots v_i^{\alpha_i} \partial_E^m\Lambda_j  - \sum_{i+j=k } \big( f_{j}\partial_E v_i +  g_{j}\partial_\varphi v_i\big) + \frac{k-q}{q} v_{k-q},
\end{eqnarray*}
where $C_{i,\alpha,m}={\hbox{\rm const}}$. Define
\begin{gather}\label{Lambda}
    \Lambda_k(E)=\langle f_k(E,\varphi)\rangle-\langle Z_k(E,\varphi)\rangle,
\end{gather}
where
\begin{gather*}
    \langle f_k(E,\varphi)\rangle\stackrel{def}{=}\frac{1}{2\pi}\int\limits_0^{2\pi} f_k (E,\phi)\, d\phi.
\end{gather*}

Hence, for every $k\geq 1$ the right-hand side of \eqref{RSys} is $2\pi$-periodic function with respect to $\varphi$ with zero average. By integrating \eqref{RSys} with respect to $\varphi$, we obtain
\begin{align*}
    v_k(E,\varphi)=  & H_k(X(\varphi,E),Y(\varphi,E))\\
        & +\frac{1}{\omega(E)}\int\limits_{0}^\varphi \Lambda_k(E) - \omega(E)  F_k(X(\phi,E),Y(\phi,E)) \partial_\phi X(\phi,E)+Z_k(E,\phi) \, d\phi
\end{align*}
for $k=1,\dots,N$.
It can easily be checked that each $v_k(E,\varphi)$ is a smooth $2\pi$-periodic function with respect to $\varphi$ such that  $v_k(0,\varphi)\equiv 0$. From \eqref{Lambda} it follows that $\Lambda_k(v)=\mathcal O(v)$ as $v\to 0$.
The function $R_{N+1}(E,\varphi,t)$ has the following form:
\begin{gather*}
R_{N+1}(E,\varphi,t)\equiv \sum_{k=N+1}^\infty t^{-\frac k q}  \Big( f_k-\frac{k-q}{q} v_{k-q}+\sum_{j=1}^{k-1} \big( f_{k-j}\partial_E v_j +  g_{k-j}\partial_\varphi v_j\big)\Big).
\end{gather*}
It is clear that  $R_{N+1}(E,\varphi,t)$ is $2\pi$-periodic functions with respect to $\varphi$ such that $R_{N+1}(0,\varphi,t)\equiv 0$ and $R_{N+1}(v,\varphi,t) =\mathcal O(t^{-(N+1)/q})$ as $t\to\infty$ for all $v\in [0,d_0]$, $\varphi\in \mathbb R$ with $d_0=(1-\sigma)E_0$.

From \eqref{VF} it follows that for all $\sigma\in (0,1)$ there exists $t_0>0$ such that
\begin{gather}\label{Vest}
    (1-\sigma) E\leq V_N(E,\varphi,t)\leq (1+\sigma) E, \quad (1-\sigma) \leq \partial_E V_N(E,\varphi,t)\leq (1+\sigma)
\end{gather}
for all $E\in [0, E_0]$, $\varphi\in \mathbb R$ and $t\geq t_0$. Hence, the transformation $(E,\varphi)\mapsto (v,\varphi)$ is reversible.

Thus, we have
\begin{Lem}
There exists a reversible change of variables $(x,y)\mapsto (v,\varphi)$ which reduces system \eqref{FulSys} into the form \eqref{VEq}, \eqref{PhiEq}.
\end{Lem}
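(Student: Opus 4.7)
\medskip

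\noindent\textbf{Proof proposal.} The plan is to assemble the desired change of variables in two stages, following the construction already laid out in Section~\ref{sec2}. First, I would invoke the action-angle transformation \eqref{exch1}, $(x,y)\mapsto (E,\varphi)$, whose Jacobian equals $1/\omega(E)\ne 0$ for $E\in(0,E_0]$, hence is locally reversible away from the equilibrium; this brings \eqref{FulSys} into the form \eqref{FulSys2} with the expansions for $f$ and $g$ given by \eqref{fg}. Second, I would apply the near-identity transformation $v=V_N(E,\varphi,t)$ defined in \eqref{VF}, with coefficients $v_k$ chosen so that the image equation takes the form \eqref{VEq} up to a remainder of order $t^{-(N+1)/q}$.

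The main content is to justify the inductive construction of the $v_k$ and of $\Lambda_k$. I would compute the total derivative along \eqref{FulSys2} and match it, order by order in $t^{-k/q}$, with the expansion prescribed by \eqref{VEq}; this yields exactly the chain \eqref{RSys}, in which $Z_k$ depends only on previously constructed data $v_1,\dots,v_{k-1}$, so the system is triangular. At step $k$, the crucial observation is that the solvability condition for the linear equation $\omega(E)\partial_\varphi v_k=\Lambda_k(E)-f_k(E,\varphi)+Z_k(E,\varphi)$ on the circle $\varphi\in\mathbb{R}/2\pi\mathbb{Z}$ is the vanishing of the $\varphi$-average of the right-hand side, which is enforced precisely by the choice \eqref{Lambda}. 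Then $v_k$ is recovered by an explicit quadrature in $\varphi$, giving a smooth $2\pi$-periodic function with $v_k(0,\varphi)\equiv 0$, since $f_k$, $Z_k$ and $H_k$ all vanish at $E=0$ together with $(0,0)$; this in turn yields $\Lambda_k(v)=\mathcal{O}(v)$ as $v\to 0$, as required for the normal form \eqref{VEq}. Propagating this through the equation for $\varphi$ and collecting the leftover terms of order $\ge t^{-(N+1)/q}$ gives the residual $R_{N+1}$ and the explicit expansion of $G_N$ in \eqref{PhiEq}.

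Reversibility of the second stage is the last thing to verify. The estimate \eqref{Vest}, which follows directly from the fact that each $v_k$ vanishes at $E=0$ and is smooth on the compact cylinder $[0,E_0]\times\mathbb{R}/2\pi\mathbb{Z}$, yields $\partial_E V_N(E,\varphi,t)\ge 1-\sigma>0$ for $t\ge t_0$. By the implicit function theorem, for each fixed $(\varphi,t)$ the map $E\mapsto V_N(E,\varphi,t)$ is a smooth diffeomorphism from $[0,E_0]$ onto its image, and the inverse $E=E(v,\varphi,t)$ is smooth and $2\pi$-periodic in $\varphi$. Composing with the action-angle map produces the required reversible change $(x,y)\mapsto(v,\varphi)$.

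The only delicate step is verifying that the chain \eqref{RSys} can be solved in $2\pi$-periodic functions at every order; that reduces to the mean-zero condition, which \eqref{Lambda} is designed to satisfy, so no genuine obstacle arises. Everything else is bookkeeping of the power-series matching and a routine application of the estimates \eqref{Vest}.
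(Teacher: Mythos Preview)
Your proposal is correct and follows essentially the same approach as the paper: the lemma is simply a summary of the two-stage construction carried out in Section~\ref{sec2}, and you have accurately reproduced the action--angle change \eqref{exch1}, the near-identity transformation \eqref{VF} with the inductive determination of $v_k$ and $\Lambda_k$ via \eqref{RSys}--\eqref{Lambda}, and the reversibility argument from \eqref{Vest}. One small remark: the estimates \eqref{Vest} come not from $v_k(0,\varphi)=0$ per se but from the boundedness of $v_k$ and $\partial_E v_k$ on the compact cylinder together with the decay factors $t^{-k/q}$; otherwise your justification is exactly what the paper intends.
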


\section{Bifurcations of the equilibrium}
\label{sec3}

In this section, possible bifurcations of the fixed point $(0,0)$ of \eqref{FulSys} as well as the trivial solution of equation \eqref{VEq} are discussed. From the properties of the function $R_{N+1}(v,\varphi,t)$ it follows that the leading terms of asymptotics for solutions of \eqref{VEq} does not depend on $\varphi$. The long-term behaviour of solutions $v(t)$ is determined by the functions $\{\Lambda_k(v)\}_{k=1}^N$. Besides, from \eqref{PhiEq} it follows that $\varphi(t)\to \infty$ as $t\to\infty$, while $v(t) \in [0,d_0]$.

Let $n\geq 1$ be the least natural number such that $\Lambda_n(v)\not\equiv 0$. Then equation \eqref{VEq} takes the form:
\begin{gather*}
    \frac{dv}{dt}=\sum_{k=n}^N t^{-\frac kq} \Lambda_k(v) + R_{N+1}(v,\varphi,t), \quad t\geq t_0.
\end{gather*}

From  \eqref{H0} and \eqref{exch1}, it follows that $E=r^2/2+\mathcal O(r^3)$ as $r\to 0$. Combining this with \eqref{Vest}, we see that $V_N(E,\varphi,t)$ is positive definite function in the vicinity of the fixed point $(0,0)$. Thus, $V_N(E,\varphi,t)$ in the variables $(x,y)$ can be used as a Lyapunov function candidate for system \eqref{FulSys}. If the total derivative of $V_N(E,\varphi,t)$ with respect to $t$ along the trajectories of system \eqref{FulSys2} is sign definite for $E$ close to zero and $\varphi\in\mathbb R$, then this function can be effectively used for the stability analysis of the equilibrium $(0,0)$. It can easily be seen that the right-hand side of \eqref{VEq} coincides with the total derivative of $V_n(E,\varphi,t)$.

\begin{Lem}\label{Lem1}
Let $n\geq 1$ be an integer such that $\Lambda_k(v)\equiv 0$ for $k<n$ and
\begin{gather}
\label{lem1cond}
\Lambda_n(v)=\lambda_n v+\mathcal O(v^2), \quad v\to 0, \quad  \lambda_n={\hbox{\rm const}}\neq 0.
\end{gather}
Then the equilibrium $(0,0)$ of system \eqref{FulSys} is unstable if $\lambda_n>0$ and is stable if $\lambda_n<0$.
Moreover, if $\lambda_n<0$ and $n<q$ ($n=q$), the equilibrium is exponentially (polynomially) stable.
\end{Lem}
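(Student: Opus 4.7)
The plan is to use the Lyapunov function $V_N(E,\varphi,t)$ constructed in Section~\ref{sec2}, with $N$ chosen sufficiently large. The bounds \eqref{Vest} together with the local expansion $E = r^2/2 + \mathcal O(r^3)$ near the origin make $V_N$ (expressed back in the variables $(x,y)$) positive definite in a neighborhood of $(0,0)$ for all $t \geq t_0$, so a sign analysis of its total derivative along trajectories of \eqref{FulSys} will suffice. By construction~\eqref{VEq} this derivative equals
\[
\frac{dV_N}{dt} = \sum_{k=n}^N t^{-k/q}\Lambda_k(v) + R_{N+1}(v,\varphi,t).
\]

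I would then use three facts established in Section~\ref{sec2}: each $\Lambda_k(v) = \mathcal O(v)$ as $v \to 0$; the hypothesis gives $\Lambda_n(v) = \lambda_n v(1 + \mathcal O(v))$; and the remainder vanishes at $v = 0$, so by smoothness $R_{N+1}(v,\varphi,t) = \mathcal O\!\left(v\, t^{-(N+1)/q}\right)$ uniformly in $\varphi$. Choosing $N$ large enough and restricting to small $v$ and large $t$, the leading term $t^{-n/q}\lambda_n v$ dominates all corrections, which yields
\[
\frac{|\lambda_n|}{2}\, t^{-n/q} v \;\leq\; \left|\frac{dV_N}{dt}\right| \;\leq\; 2|\lambda_n|\, t^{-n/q} v, \qquad \mathrm{sign}\!\left(\frac{dV_N}{dt}\right) = \mathrm{sign}(\lambda_n).
\]
Combined with \eqref{Vest}, which makes $v$ comparable to $V_N$, this produces the differential inequality $dV_N/dt \leq -c\, t^{-n/q} V_N$ when $\lambda_n < 0$, and $dV_N/dt \geq c\, t^{-n/q} V_N$ when $\lambda_n > 0$, for some constant $c > 0$.

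Integrating this inequality yields both the qualitative stability conclusion and the asserted rates. For $\lambda_n < 0$ one obtains $V_N(t) \leq V_N(t_0) \exp\!\bigl(-c \int_{t_0}^{t} s^{-n/q}\, ds\bigr)$; the integral grows like $t^{1-n/q}$ when $n < q$ (yielding the exponential rate of the statement) and like $\log t$ when $n = q$ (yielding polynomial decay). Translating back to $(x,y)$ via $V_N \asymp r^2$ then gives the stability assertion for the original fixed point. For $\lambda_n > 0$ the reversed inequality forces $V_N$ to grow along any nontrivial trajectory starting near zero, and a standard Chetaev-type argument on $V_N$ yields instability.

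The main difficulty I expect is the bookkeeping: one must fix $N$ and the smallness thresholds for $v$ and $1/t$ in a consistent order so that the leading term genuinely dominates the intermediate corrections $t^{-k/q}\Lambda_k(v)$ for $n < k \leq N$ as well as the remainder $R_{N+1}$, and all estimates must be uniform in the angular variable $\varphi$. A secondary technical point is pulling the one-dimensional conclusion about $v(t)$ back to a stability statement for $(x,y)$, which relies on the reversibility of the change of variables together with \eqref{Vest} and the expansion $E = r^2/2 + \mathcal O(r^3)$.
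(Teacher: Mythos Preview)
Your approach is essentially the same as the paper's: use $V_N$ as a Lyapunov function, show its derivative along trajectories has the sign of $\lambda_n$ and is bounded by $c\,t^{-n/q}v$, then integrate the resulting differential inequality to read off stability/instability and the decay rates. The only difference is that the paper simply takes $N=n$, so the sum in \eqref{VEq} reduces to the single term $t^{-n/q}\Lambda_n(v)$ and the ``intermediate corrections'' $t^{-k/q}\Lambda_k(v)$ for $n<k\le N$ that you worry about never appear---this removes the bookkeeping difficulty you anticipate.
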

\begin{proof}
Consider $V_N(E,\varphi,t)$ with $N=n$ as a Lyapunov function candidate for system \eqref{FulSys}. From \eqref{lem1cond} it follows that the function $v(t)=V_n(E(t),\varphi(t),t)$ satisfies the equations:
\begin{gather*}
    \frac{dv}{dt}=t^{-\frac nq} v (\lambda_n  +\mathcal O(v ) +\mathcal O( t^{-\frac 1q}) )
\end{gather*}
as $t\to\infty$ and $v\to 0$ for all $\varphi\in\mathbb R$. Hence, for all $\sigma\in (0,1)$ there exist $0<d_1\leq d_0$ and $t_1\geq t_0$ such that
\begin{gather}
    \begin{split}\label{estV}
        \frac{dv}{dt}&\geq t^{-\frac nq} (1-\sigma)\lambda_m v\geq 0  \quad \text{if }\ \ \lambda_n>0, \\
        \frac{dv}{dt} &\leq  - t^{-\frac nq} (1-\sigma)|\lambda_m| v\leq 0  \quad \text{if }\ \ \lambda_n<0,
    \end{split}
\end{gather}
for all $v\in [0,d_1]$, $\varphi\in\mathbb R$ and $t\geq t_1$. Integrating the first estimate in \eqref{estV} with respect to $t$ yields the instability of the trivial solution $v(t)\equiv 0$ of equation \eqref{VEq} for all $\varphi\in \mathbb R$.  Indeed, there exists $\epsilon\in (0,d_1/4)$ such that for all $\delta\in (0,\epsilon)$ the solution $v(t)$ with initial data $v(t_1)=\delta$ exceeds the value $\epsilon$ as $t>t_\ast$, where
\begin{eqnarray*}
    t_\ast=t_1 \Big(\frac{2\epsilon}{\delta}\Big)^{\frac{1}{(1-\sigma)\lambda_n}} \quad &\text{if }& \quad \frac{n}{q}=1; \\
    t_\ast^{1-\frac{n}{q}}=t_1^{1-\frac{n}{q}} +\Big(\frac{q-n }{(1-\sigma)\lambda_n q}\Big) \log \Big(\frac{2\epsilon}{\delta}\Big) \quad &\text{if }& \quad \frac{n}{q}\neq 1.
\end{eqnarray*}

Similarly, by integrating the second estimate in \eqref{estV}, we obtain the following inequalities:
\begin{eqnarray}
    \nonumber           0 \leq v(t)\leq v(t_1) \Big(\frac{t}{t_1}\Big)^{-(1-\sigma)|\lambda_n|} \quad &\text{if }& \quad \frac{n}{q}=1,\\
    \label{vestexp}    0 \leq  v(t)\leq v(t_1) \exp\Big(-\frac{(1-\sigma)|\lambda_n| q}{q-n}\big(t^{1-\frac n q}-t_1^{1-\frac n q}\big) \Big) \quad &\text{if }& \quad \frac{n}{q}\neq 1,
\end{eqnarray}
as $t\geq t_1$. From the last estimates it follows that the trivial solution of system \eqref{FulSys2} is stable for all $\varphi\in\mathbb R$. Moreover, the stability is exponential if $n<q$, polynomial if $n=q$ and marginal if $n>q$. Taking into account \eqref{exch1} and \eqref{Vest}, we obtain the corresponding propositions on the stability of the equilibrium $(0,0)$ of system \eqref{FulSys}.
\end{proof}

Note that when the equilibrium loses the stability, the solutions of equation \eqref{VEq} starting from the vicinity of zero either remain inside the domain $(0,d_0)$, or cross the boundary $d_0$ at $t_{\text{exit}}>t_0$.  In the first case, limit cycles may occur. The conditions which guarantee the existence of limit cycles are discussed in the next section. In the second case, the trajectories of \eqref{FulSys} may pass through a separatrix of the limiting system as $t>t_{\text{exit}}$ and can be captured by another attractor. However, such global bifurcations of solutions are not discussed in this paper.

Thus, for non-autonomous perturbations satisfying the conditions of Lemma~\ref{Lem1}, $\lambda_n$ can be considered as a bifurcation parameter such that $\lambda_n=0$ is a critical value.

Let us consider the case when the leading term of the right-hand side of equation \eqref{VEq} is nonlinear with respect to $v$.

\begin{Lem}\label{Lem02}
Let $1\leq m<n$ be integers such that $\Lambda_k(v)\equiv 0$ for $k<m$ and
\begin{gather*}
\Lambda_m(v)=\gamma_{m,s} v^s+\mathcal O(v^{s+1}), \quad \Lambda_j(v)=\mathcal O(v^s), \quad m\leq j<n, \quad \Lambda_{n}(v)=\lambda_n v + \mathcal O(v^2), \quad v\to 0,
\end{gather*}
where $\gamma_{m,s},\lambda_n={\hbox{\rm const}}\neq 0$, $s\in\mathbb Z$, $s\geq2$.
Then the equilibrium $(0,0)$ of system \eqref{FulSys} is
\begin{itemize}
  \item  stable if $\lambda_n<0$ and $\gamma_{m,s}<0$;
  \item  unstable if $\lambda_n>0$ and $\gamma_{m,s}>0$.
\end{itemize}
\end{Lem}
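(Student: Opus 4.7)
The plan is to follow the proof of Lemma~\ref{Lem1} almost verbatim, taking $V_N$ with $N=n$ as a Lyapunov function candidate for system \eqref{FulSys} and comparing $dv/dt$ along the trajectories of \eqref{FulSys2} with the single sign-definite term $t^{-n/q}\lambda_n v$. The key observation is that although the new hypotheses introduce a lower-order-in-$t$ contribution $t^{-m/q}\Lambda_m(v)$ with $m<n$, this contribution has the same sign on small $v>0$ as the linear $t^{-n/q}\Lambda_n(v)$ piece in both cases of interest, so it either can be dropped from an upper bound (stable case) or kept as an extra same-sign summand (unstable case).

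First I would rewrite equation \eqref{VEq} for $N=n$. Using $\Lambda_k\equiv 0$ for $k<m$, together with $\Lambda_j(v)=\mathcal O(v^s)$ for $m\leq j<n$ and $\Lambda_n(v)=\lambda_n v+\mathcal O(v^2)$, the equation for $v(t)=V_n(E(t),\varphi(t),t)$ takes the form
\[
\frac{dv}{dt}=t^{-m/q}v^s\bigl[\gamma_{m,s}+\mathcal O(v)+\mathcal O(t^{-1/q})\bigr]+t^{-n/q}v\bigl[\lambda_n+\mathcal O(v)\bigr]+R_{n+1}(v,\varphi,t)
\]
as $v\to 0^+$ and $t\to\infty$, uniformly in $\varphi\in\mathbb R$. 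Because $R_{n+1}(0,\varphi,t)\equiv 0$ and $R_{n+1}$ is smooth in $v$, Taylor's formula gives $R_{n+1}=\mathcal O(v\,t^{-(n+1)/q})$, so the remainder is absorbed by the linear term $t^{-n/q}\lambda_n v$ for $t$ sufficiently large.

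Next I would split into cases, fixing $\sigma\in(0,1)$. If $\lambda_n<0$ and $\gamma_{m,s}<0$, every explicit summand in the expansion above is non-positive on small $v\geq 0$, so there exist $d_1\in(0,d_0]$ and $t_1\geq t_0$ such that
\[
\frac{dv}{dt}\leq -t^{-n/q}(1-\sigma)|\lambda_n|\,v\leq 0,\qquad v\in[0,d_1],\ \varphi\in\mathbb R,\ t\geq t_1,
\]
simply by discarding the non-positive $v^s$ contribution. This is precisely the second inequality in \eqref{estV}, so integration reproduces \eqref{vestexp} and, transferring back to $(x,y)$ via \eqref{exch1} and \eqref{Vest} as at the end of the proof of Lemma~\ref{Lem1}, yields stability of the equilibrium. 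If instead $\lambda_n>0$ and $\gamma_{m,s}>0$, each summand is non-negative on small $v\geq 0$, so the reversed inequality $dv/dt\geq t^{-n/q}(1-\sigma)\lambda_n v\geq 0$ holds on the same set; integrating it as in the first half of the proof of Lemma~\ref{Lem1} shows that any initial value $v(t_1)=\delta\in(0,\epsilon)$ exits $[0,\epsilon]$ in finite time, which implies instability.

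I expect the only substantive work to be the uniform bookkeeping: verifying that the intermediate terms $t^{-j/q}\Lambda_j(v)$ for $m<j<n$, the $\mathcal O(v)$ corrections inside $\Lambda_m$ and $\Lambda_n$, and the remainder $R_{n+1}$ can all be absorbed into $\sigma$-fractions of the two leading terms by one simultaneous choice of $d_1$ and $t_1$, uniformly in $\varphi$. Once this is done no new ideas beyond those already employed in Lemma~\ref{Lem1} are required, and in particular the decay rates (exponential for $n<q$, polynomial for $n=q$) carry over automatically in the stable case since our upper bound is governed solely by the linear term.
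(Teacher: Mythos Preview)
Your proposal is correct and follows essentially the same approach as the paper: both use $V_n$ as a Lyapunov function, group the right-hand side of the $v$-equation into a $t^{-m/q}v^s$ piece and a $t^{-n/q}v$ piece, and exploit that under the stated sign hypotheses both pieces have the same sign so that a single differential inequality of the type in \eqref{estV} results. The only cosmetic difference is that the paper keeps both terms in the final inequality while you discard the $v^s$ contribution; your closing remark on decay rates is a small bonus beyond what the lemma actually asserts (the paper defers those refinements to the next lemma).
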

\begin{proof}
As above, consider $V_n(E,\varphi,t)$ as a Lyapunov function candidate. In this case, its total derivative has the form:
\begin{gather*}
\frac{dV_n}{dt}\Big|_{\eqref{FulSys2}}\equiv \frac{dv}{dt}=t^{-\frac m q} \Big[\Lambda_m(v)+\varrho_m(v,\varphi,t)\Big] + t^{-\frac n q} \Big[\Lambda_n(v)+\varrho_n(v,\varphi,t)\Big],
\end{gather*}
where $v(t)=V_n(E(t),\varphi(t),t)$, $|\varrho_m(v,\varphi,t)|\leq M t^{-1/q} v^s$, $|\varrho_n(v,\varphi,t)|\leq M t^{-1/q} v$
as $v\to 0$, $t\to\infty$ for all $\varphi\in\mathbb R$ with $M={\hbox{\rm const}}>0$.
Therefore, for all $\sigma\in (0,1)$ there exist $0<d_1\leq d_0$ and $t_1\geq t_0$ such that
\begin{align*}
\frac{dv}{dt} \leq  -(1-\sigma) \Big(t^{-\frac nq}|\gamma_{n,s}|   v^s+t^{-\frac nq} | \lambda_n| v\Big)\leq 0  \quad &\text{if }\quad \lambda_n<0, \quad  \gamma_{m,s}<0, \\
\frac{dv}{dt} \geq (1-\sigma) \Big( t^{-\frac nq} \gamma_{n,s}  v^s+t^{-\frac nq} \lambda_n v\Big)\geq 0  \quad &\text{if }\quad \lambda_n>0, \quad  \gamma_{m,s}>0,
\end{align*}
for all  $v\in [0,d_1]$, $\varphi\in\mathbb R$ and $t\geq t_1$. From the last estimates it follows that the solution $E(t)\equiv 0$ to system \eqref{FulSys2} is stable if $\lambda_n<0$, $\gamma_{m,s}<0$ and unstable if $\lambda_n>0$, $\gamma_{m,s}>0$. Combining \eqref{exch1} and  \eqref{Vest}, we obtain the corresponding propositions on the stability of the equilibrium $(0,0)$ of system \eqref{FulSys}.
\end{proof}

Note that in some cases the last proposition can be improved. In particular, we have

\begin{Lem}\label{Lem2}
Under the conditions of Lemma~\ref{Lem02}, we have
\begin{itemize}
  \item in case $m<n<q$, the equilibrium $(0,0)$ of system \eqref{FulSys} is
  \begin{itemize}
    \item  exponentially stable if $\lambda_n<0$;
    \item   polynomially stable if $\lambda_n>0$ and $\gamma_{m,s}<0$.
  \end{itemize}
  \item  in case $m<n=q$,  the equilibrium $(0,0)$ of system \eqref{FulSys} is polynomially stable if  $ \lambda_n+\frac{n-m}{q(s-1)}<0$ or $ \lambda_n+\frac{n-m}{q(s-1)}>0$ and $\gamma_{m,s}<0$;
  \item in case $m<q<n$, the equilibrium $(0,0)$ of system \eqref{FulSys} is polynomially stable if $\gamma_{m,s} <0$;
  \item in case $q \leq m<n$, the equilibrium $(0,0)$ of system \eqref{FulSys} is stable if $\gamma_{m,s} <0$.
\end{itemize}
\end{Lem}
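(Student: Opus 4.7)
The proof proceeds as in Lemma~\ref{Lem02}: take $V_n(E,\varphi,t)$ as a Lyapunov function candidate, so that $v(t)=V_n(E(t),\varphi(t),t)$ satisfies
\[
\frac{dv}{dt}=t^{-m/q}\bigl[\gamma_{m,s}v^s+\varrho_m\bigr]+t^{-n/q}\bigl[\lambda_n v+\varrho_n\bigr],
\]
with $|\varrho_m|\leq Mt^{-1/q}v^s$ and $|\varrho_n|\leq Mt^{-1/q}v$ for $v\in[0,d_1]$, $\varphi\in\mathbb{R}$, $t\geq t_1$. In each case my plan is to identify the dominant balance and construct an explicit super-solution realising the stated rate.

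The central device is the Bernoulli substitution $w=v^{1-s}$, which linearises the principal two-term balance:
\[
\frac{dw}{dt}=(s-1)|\gamma_{m,s}|\,t^{-m/q}-(s-1)\lambda_n\,t^{-n/q}w+\text{l.o.t.}
\]
(with signs adjusted according to $\gamma_{m,s}$). This treats Cases~1(b), 2, and 3 in a unified way; Case~1(a) is handled by a direct stretched-exponential barrier and Case~4 by a pointwise sign argument on the derivative.

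For Case~1 with $\lambda_n<0$, I use the super-solution $\bar v(t)=v(t_1)\exp(-c(t^{1-n/q}-t_1^{1-n/q}))$ with $c=(1-\sigma)|\lambda_n|/(1-n/q)$; the potentially destabilising $\gamma_{m,s} t^{-m/q}\bar v^s$ is absorbed since $\bar v^{s-1}t^{(n-m)/q}\to 0$ super-polynomially once $v(t_1)$ is chosen small. For Case~1 with $\lambda_n>0$ and $\gamma_{m,s}<0$, the linear $w$-equation is integrated with integrating factor $\exp((s-1)\lambda_n t^{1-n/q}/(1-n/q))$, and asymptotic integration by parts gives $w(t)\asymp t^{(n-m)/q}$, hence $v\lesssim t^{-(n-m)/[q(s-1)]}$. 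For Case~2, a further rescaling $w=t^{(q-m)/q}z$ and the change of time $\tau=\log t$ reduce the leading $w$-equation to the autonomous linear ODE
\[
\frac{dz}{d\tau}=-(s-1)\Bigl[\lambda_n+\frac{n-m}{q(s-1)}\Bigr]z-(s-1)\gamma_{m,s},
\]
which is stable in $\tau$ exactly when the bracket is positive; in that regime $z(\tau)$ approaches the equilibrium $-\gamma_{m,s}/[\lambda_n+(n-m)/(q(s-1))]$, positive under $\gamma_{m,s}<0$, whence $v\lesssim t^{-(n-m)/[q(s-1)]}$. When the bracket is negative the ODE is linearly unstable; for $v(t_1)$ small the initial datum $z(t_1)$ is large, so $z(\tau)\to+\infty$ regardless of the sign of $\gamma_{m,s}$, and $v$ decays at least as fast as the pure linear rate $t^{\lambda_n}$. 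For Case~3 ($m<q<n$) the linear term is integrable in $t$ and subdominant, so $d(v^{1-s})/dt\approx(s-1)|\gamma_{m,s}|t^{-m/q}$ integrates directly to $v\lesssim t^{-(q-m)/[q(s-1)]}$. For Case~4 ($q\leq m$) both factors $t^{-m/q},t^{-n/q}$ are at most $t^{-1}$, and $\gamma_{m,s}<0$ makes the right-hand side pointwise nonpositive for $v\in[0,d_1]$ and $t\geq t_1$, giving Lyapunov stability.

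The main obstacle will be the rigorous control of the remainders $\varrho_m,\varrho_n$ after the singular substitution $w=v^{1-s}$: both become unbounded as $v\to 0$, so one must confine the solution to a shrinking invariant set $v\leq C\,t^{-(n-m)/[q(s-1)]}$ (or the analogous scale in Case~3) and close the argument by a barrier/continuity method showing that the rescaled variable $z$ stays trapped between two slightly perturbed equilibria of the autonomous ODE. Once the decay estimates on $v(t)$ are established, the corresponding statements for the equilibrium $(0,0)$ of system \eqref{FulSys} follow from \eqref{exch1} and \eqref{Vest}, exactly as at the end of the proofs of Lemmas~\ref{Lem1} and~\ref{Lem02}.
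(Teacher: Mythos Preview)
Your Case~4 argument contains a genuine gap. You claim that for $q\leq m<n$ and $\gamma_{m,s}<0$ the right-hand side of the $v$-equation is ``pointwise nonpositive for $v\in[0,d_1]$ and $t\geq t_1$''. This is false whenever $\lambda_n>0$: near $v=0$ the linear term $t^{-n/q}\lambda_n v$ dominates the nonlinear term $t^{-m/q}\gamma_{m,s}v^s$ (since $s\geq 2$), so $dv/dt>0$ for $0<v\ll 1$ at every fixed $t$. The paper's argument is instead a barrier at a \emph{fixed} level $\epsilon$: on the annulus $\epsilon/2\leq v\leq \epsilon$ one has $|\gamma_{m,s}|v^{s-1}\geq |\gamma_{m,s}|(\epsilon/2)^{s-1}$, and because $n>m$ one can choose $t_2$ so large that $(1-\sigma)|\gamma_{m,s}|(\epsilon/2)^{s-1}t^{-m/q}$ dominates $Mt^{-n/q}$ for all $t\geq t_2$. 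Hence $dv/dt<0$ on the annulus for $t\geq t_2$, and no solution starting below $\epsilon/2$ at time $t_2$ can reach $\epsilon$. This yields only neutral Lyapunov stability, which is exactly what the lemma asserts in this case.

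On the remaining cases your Bernoulli route $w=v^{1-s}$ is a legitimate alternative to the paper's method, but the paper's device is simpler and sidesteps precisely the obstacle you flag. Instead of inverting $v$, the paper rescales $u=t^{\nu}v$ with $\nu=(n-m)/(q(s-1))$ (respectively $w=t^{\eta}v$, $\eta=(q-m)/(q(s-1))$, in Case~3). The two are algebraically equivalent---your $z$ in Case~2 is exactly $u^{1-s}$---but $u$ remains bounded and, when $\gamma_{m,s}<0$ and the effective linear coefficient is positive, converges to the finite limit $U_c=(\lambda_n/|\gamma_{m,s}|)^{1/(s-1)}$ (or its $\nu$-shifted analogue). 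The transformed equation reads
\[
\frac{du}{dt}=t^{-n/q}\,u\Bigl(\lambda_n+\gamma_{m,s}u^{s-1}+\mathcal O(t^{-\nu})+\mathcal O(t^{-1/q})\Bigr)+\nu t^{-1}u,
\]
and a quadratic Lyapunov function $\ell=(u-U_c)^2/2$ closes each sub-case directly, with the remainder terms already of the right order. No singularity at $v=0$, no shrinking invariant set, and no barrier/continuity bootstrap are needed.
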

\begin{proof}
It can easily be checked that the derivative of the function $v(t)=V_n(E(t),\varphi(t),t)$ satisfies the asymptotic estimate:
\begin{gather*}
    \frac{dV_n}{dt}\Big|_{\eqref{FulSys2}}\equiv \frac{dv}{dt}=t^{-\frac mq}v^s(\gamma_{m,s}+\mathcal O(v) +\mathcal O(t^{-\frac 1q}))+t^{-\frac nq}v (\lambda_n +\mathcal O(v) +\mathcal O(  t^{-\frac{ 1}{q}}) )
\end{gather*}
 as $t\to\infty$, $v\to 0$ for all $\varphi\in\mathbb R$. The right-hand side of the last expression is not sign definite uniformly for all small $v$ and big $\tau$. Indeed, if $v\sim \epsilon$ and $t\sim \epsilon^{-\kappa}$,  where $0<\epsilon\ll 1$, $\kappa={\hbox{\rm const}}$, then the sign of $dv/dt$ is determined by $\lambda_n$ in case $\kappa<q(s-1)/(n-m)$, and by $\gamma_{m,s}$ in the opposite case. Therefore, $V_n(E,\varphi,t)$ can not be used as a Lyapunov function for system \eqref{FulSys}.

Define
\begin{gather*}
    U(E,\varphi,t)\equiv t^\nu V_n(E,\varphi,t), \quad \nu=\frac{n-m}{q(s-1)}>0.
\end{gather*}
From \eqref{Vest} it follows that
\begin{gather}\label{Uineq}
    (1-\sigma) t^{\nu} E\leq U(E,\varphi,t)\leq (1+\sigma) t^{\nu} E
\end{gather}
for all $E\in [0,E_0]$, $\varphi\in \mathbb R$ and $t\geq t_0$. This function corresponds to the change of variable $u(t)=t^{\nu} v(t)$ such that equation \eqref{VEq} takes the form:
\begin{eqnarray}\label{Ueq}
    \frac{du}{dt} &=&\nu t^{-1}+\sum_{k=m}^n t^{\nu-\frac kq} \Lambda_k\big(t^{-\nu} u\big) + t^{\nu} R_{n+1}\big(t^{-\nu}u,\varphi,t\big).
\end{eqnarray}
Hence the total derivative of the function $U(E,\varphi,t)$ with respect to $t$ along the trajectories of system \eqref{FulSys2} has the following asymptotics:
 \begin{gather}\label{Uest}
    \frac{dU}{dt}\Big|_{\eqref{FulSys2}}=\frac{du}{dt}=u \Big(\nu t^{-1}  +  t^{-\frac nq} (\lambda_n+\gamma_{m,s}u^{s-1}) +\mathcal O(t^{-\frac{n+q\nu}{q}})+\mathcal O( t^{-\frac{n+1}{q}})\Big)
\end{gather}
as $t\to\infty$ for all $u\in [0, U_0]$ and $\varphi\in\mathbb R$ with $U_0={\hbox{\rm const}}>0$.

Consider the case $m<n<q$. From \eqref{Uest} it follows that
\begin{gather*}
    \frac{du}{dt}=u  t^{-\frac nq}\Big(  \lambda_n+\gamma_{m,s}u^{s-1} +\mathcal O(t^{-\nu}) +\mathcal O(t^{-\frac 1q})\Big)
\end{gather*}
as $t\to\infty$. If $\lambda_n<0$, then for all  $\sigma\in (0,1)$ there exist $0<U_1\leq U_0$ and $t_1\geq t_0 $ such that
\begin{gather*}
    \frac{du}{dt}\leq  - t^{-\frac nq} (1-\sigma)|\lambda_n| u\leq 0
\end{gather*}
for all $u\in [0,U_1]$, $\varphi\in\mathbb R$ and $t\geq t_1$. Integrating the last inequality with respect to $t$, we get an estimate of the form \eqref{vestexp} as $t\geq t_1$. Combining this with \eqref{Uineq}, we get exponential stability of the solution $E(t)\equiv 0$ of equation \eqref{FulSys2}.

If $\lambda_n>0$ and $\gamma_{m,s}<0$, the leading term of $du/dt$ has a zero at $u=U_c$, $U_c=(\lambda_n/|\gamma_{m,s}|)^{1/(s-1)}$. Let us show that $U(E(t),\varphi(t),t)\to U_c$ as $t\to\infty$. Consider the change of variable $u(t)=U_c+z(t)$ in equation \eqref{Ueq}. Then $z(t)$ satisfies the equation:
\begin{gather}
    \label{zeq0}
    \frac{dz}{dt} =  t^{-\frac nq} (U_c+z) \Big(\lambda_n+\gamma_{m,s} (U_c+z)^{s-1} \Big)+ p(z,\varphi,t),
\end{gather}
where $|p(z,\varphi,t)|\leq M t^{-n/q} [t^{-\nu}+ t^{-1/q}]$ as $t\geq t_1$ for all $|z|\leq z_1$ and $\varphi\in\mathbb R$ with positive constants $M$ and $z_1$. It can easily be checked that the unperturbed equation with $p(z,\varphi,t)\equiv 0$ has asymptotically stable solution $z(t)\equiv 0$.
Let us show that this solution is stable with respect to the perturbation $p(z,\varphi,t)$. Consider $\ell(z)=z^2/2$ as a Lyapunov function candidate for equation \eqref{zeq0}. The total derivative of $\ell(z)$ has the form:
\begin{gather*}
\frac{d\ell}{dt}\Big|_{\eqref{zeq0}} =  t^{-\frac nq}  z\Big(-|\gamma_{m,s}| (s-1)U_c^{s-1} z + \mathcal O(z^2)+\mathcal O(t^{-\nu})+ \mathcal O(t^{-\frac 1q})\Big)
\end{gather*}
as $z\to 0$ and $t\to\infty$. Therefore, for all  $\sigma>0$  there exist $0<z_2\leq z_1$ and $t_2\geq t_1$ such that
\begin{gather*}
\frac{d\ell }{dt}\Big|_{\eqref{zeq0}} \leq   t^{-\frac nq}  \Big( -2(1-\sigma)|\gamma_{m,s}| (s-1)U_c^{s-1}  \ell +M z_2 [t^{-\nu}+ t^{-1/q}]\Big)
\end{gather*}
 for all $|z|\leq z_2$, $\varphi\in \mathbb R$ and $t\geq t_2$. By integrating the last inequality, we get
$\ell(z(t))=\mathcal O(t^{-\nu}) +\mathcal O(t^{-1/q})$ as $t\to\infty$ for solutions with initial data $|z(t_2)|\leq z_2$. Hence, $U(E(t),\varphi(t),t)=U_c+\mathcal O(t^{-\nu/2}) +\mathcal O(t^{-1/2q})$ and $E(t)=\mathcal O(t^{-\nu})$ as $t\to\infty$. Therefore, the solution $E(t)\equiv 0$ of \eqref{FulSys2} is polynomially stable.

Consider the case $m<n=q$. It follows from \eqref{Uest} that
\begin{gather*}
 \frac{du}{dt}=u  t^{-1}\Big( \lambda_n+ \nu+\gamma_{m,s}u^{s-1} +\mathcal O(t^{-\nu})+\mathcal O(t^{-\frac 1q})\Big)
\end{gather*}
as $t\to\infty$. If $\lambda_n+\nu<0$, then for all $\sigma>0$ there exist $0<U_1\leq U_0$ and $t_1\geq t_0$ such that
\begin{gather*}
\frac{du}{dt}\leq   t^{-1} (-|\lambda_n+\nu|+\sigma)u\leq 0
\end{gather*}
for all $u\in [0, U_1]$, $\varphi\in\mathbb R$ and $t\geq t_1$. Integrating the last inequality yields $ 0\leq v(t)=t^{-\nu} u(t)\leq v(t_1)  (t/t_1)^{\lambda_n+\sigma} $ as $t\geq t_1$. Therefore, the solution $E(t)\equiv 0$ is polynomially stable. If $\lambda_n+\nu>0$ and $\gamma_{m,s}<0$, then, as above, there is a family of solutions such that $U(E(t),\varphi(t),t)=U_\nu+\mathcal O(t^{-\nu/2}) +\mathcal O(t^{-1/2q})$ as $t\to\infty$, where $U_\nu=((\lambda_n+\nu)/|\gamma_{m,s}|)^{1/(s-1)}$. Taking into account the transformation of variables, we get polynomial stability of the solution $E(t)\equiv 0$ to system \eqref{FulSys2}.

In the case $m<q<n$, the function $U(E,\varphi,t)$ can not be used in the stability analysis. Consider the function
\begin{gather*}
    W(E,\varphi, t)\equiv t^\eta V_n(E,\varphi,t),\quad \eta=\frac{q-m}{q(s-1)}>0,
\end{gather*}
which corresponds to the change of variables in \eqref{VEq}: $v(t)=t^{-\eta} w(t)$. The total derivative of $W(E,\varphi, t)$ has the asymptotics
\begin{gather*}
 \frac{dw}{dt}=t^{-1} w \Big( \eta +\gamma_{m,s} w^{s-1}+\mathcal O(t^{-\beta})+ \mathcal O(t^{-\frac{n-m}{q}}) \Big)
\end{gather*}
as $t\to\infty$ for all $w\in[0,W_0]$ and $\varphi\in \mathbb R$ with $W_0={\hbox{\rm const}}>0$. If $\gamma_{m,s}<0$, then, as above, system has a family of solutions such that $W(E(t),\varphi(t),t)=W_c+\mathcal O(t^{-\eta/2})+ \mathcal O(t^{-(n-m)/2q})$ as $t\to\infty$, where $W_c=(\eta/|\gamma_{m,s}|)^{1/(s-1)}$. Hence, $E(t)=\mathcal O(t^{-\eta})$ as $t\to\infty$.

Finally, consider the case $q\leq m<n$. For all $\sigma\in (0,1)$ there exist $0<d_1\leq d_0$ and $t_1\geq t_0 $ such that
\begin{gather*}
\frac{dv}{dt}\leq   - (1-\sigma) |\gamma_{m,s}| t^{-\frac m q}v^s+M t^{-\frac n q} v
\end{gather*}
for all  $v\in [0,d_1]$, $\varphi\in\mathbb R$ and $t\geq t_1$.
Let us fix $0<\epsilon<d_1$ and define
\begin{gather*}
t_2=\max\Big\{\Big(\frac{2^s M}{\epsilon^{s-1} (1-\sigma) |\gamma_{m,s}|}\Big)^{\frac{q}{n-m}},t_1\Big\}.
\end{gather*}
Then for all $v\in (\epsilon/2,\epsilon)$ and $t\geq t_2$ we have
 \begin{gather*}
\frac{dv}{dt}\leq -  t^{-\frac mq} \frac{ (1-\sigma)|\gamma_{m,s}| }{2} v^s.
\end{gather*}
This implies that any solution $v(t)$ of equation \eqref{VEq} with initial data $0<v(t_2)<\epsilon/2$ can not exceed the value $\epsilon$ as $t\geq t_2$. Hence, the solution $E(t)\equiv 0$ is at least neutrally stable.
\end{proof}

Let us remark that in the case $n=q$ with $\gamma_{m,s}>0$ and $\lambda_n+\nu>0$, Lyapunov stability of the equilibrium is not justified. Moreover, from Lemma~\ref{Lem2} it follows that the trivial solution is weakly unstable with the weight $t^\nu$: there exists $\epsilon>0$ such that for arbitrarily small initial data $\exists\, t_\ast>0$: $E(t)t^\nu\geq \epsilon$ as $t\geq t_\ast$. From \eqref{H0} it follows that the equilibrium $(0,0)$ is unstable with the weight $t^{\nu/2}$. Similarly, in the case $m<q<n$ with $\gamma_{m,s}>0$, the fixed point $(0,0)$ of system \eqref{FulSys} is unstable with the weight $t^{\eta/2}$.

For non-autonomous perturbations satisfying the conditions of Lemma~\ref{Lem02}, the stability of the equilibrium is determined by two parameters $\lambda_n$ and $\gamma_{m,s}$ (see~Fig.~\ref{Fig0}). The partition of the parameter plane depends on the ratio $n/q$. Note that if $\lambda_n>0$, the equilibrium becomes unstable in the corresponding linearized system. However, the asymptotic stability can preserve in the complete system due to nonlinear terms of equations. In this case, the system has a Hopf bifurcation in the scaled variables.

\begin{figure}
\centering
\subfigure[$n<q$]{\includegraphics[width=0.3\linewidth]{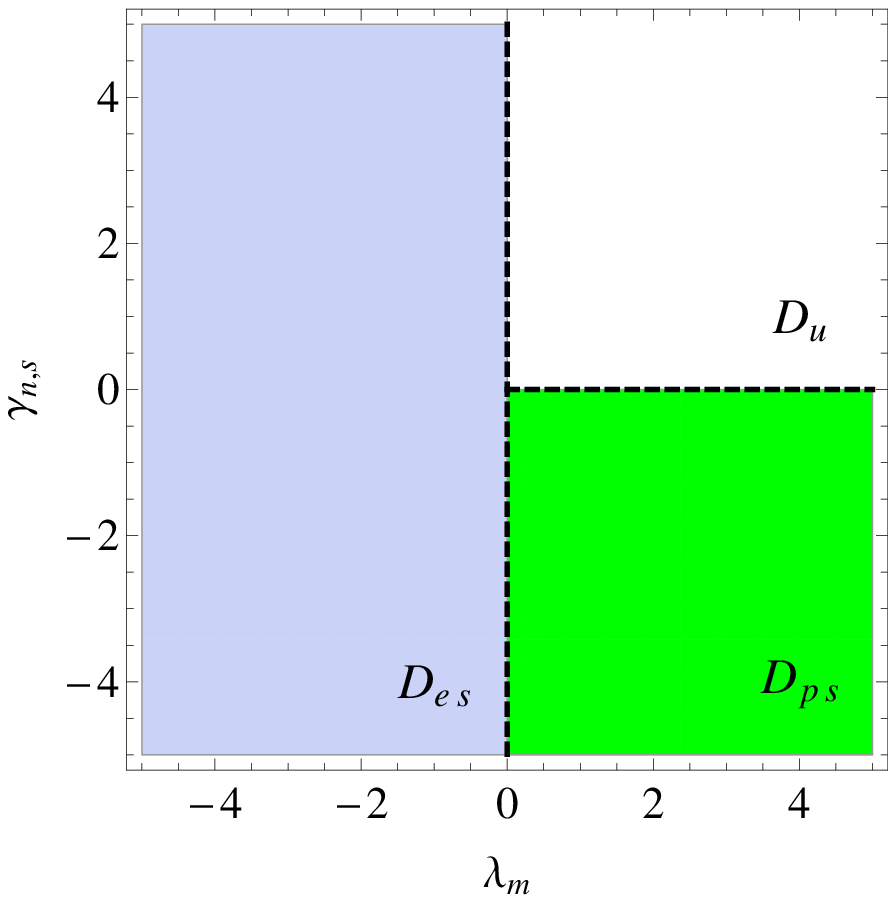}} \hspace{3ex}
\subfigure[$n=q$]{\includegraphics[width=0.3\linewidth]{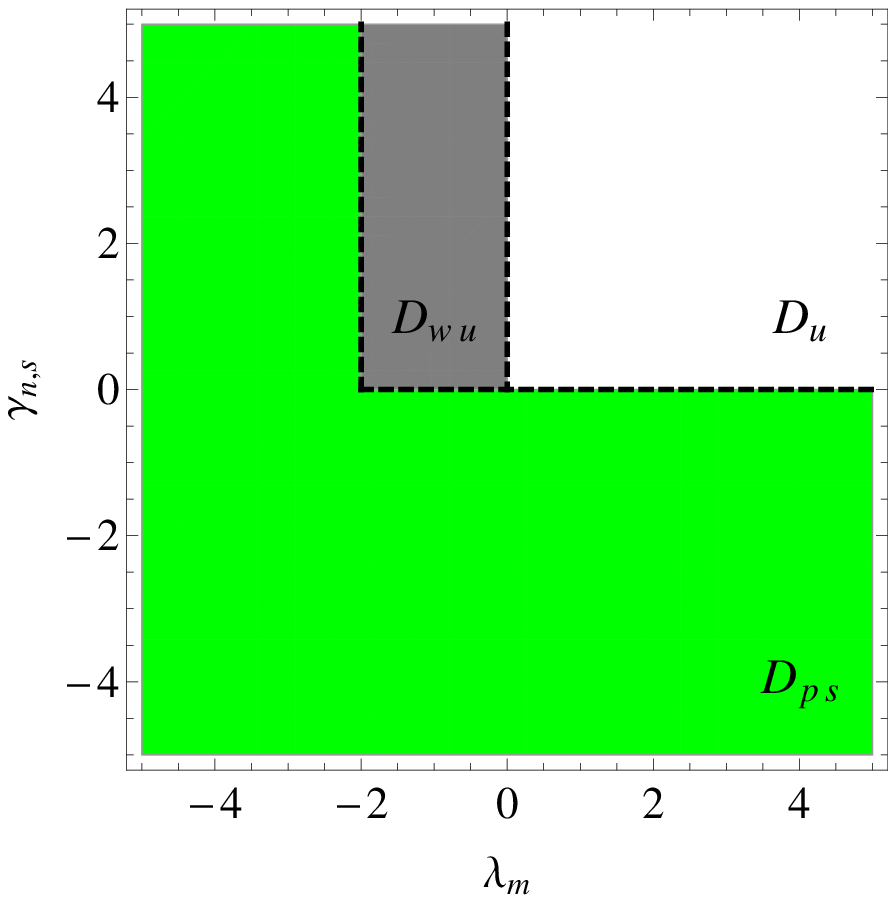}} \hspace{3ex}
\subfigure[$q<n$]{\includegraphics[width=0.3\linewidth]{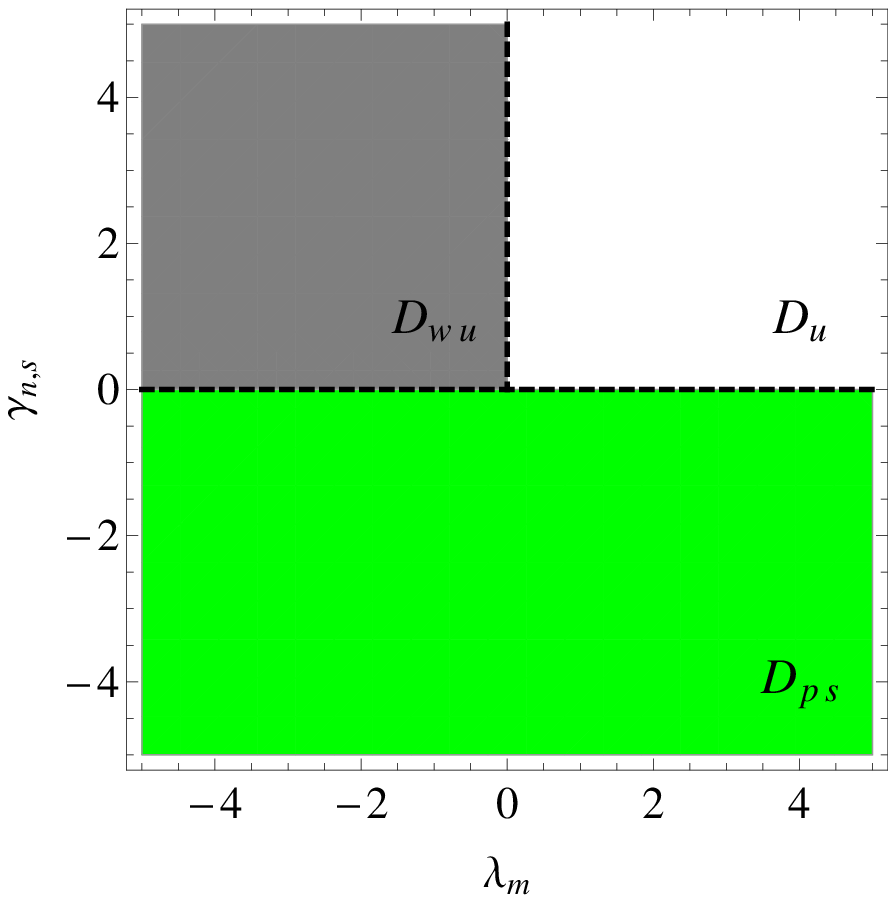}}
\caption{\small Partition of the parameter plane $(\lambda_n,\gamma_{m,s})$ when $m<q$. Here, $D_{e s}$ and $D_{p s}$ are the domains of exponential and polynomial stability, $D_{u}$ is the domain of instability and $D_{w u}$ is the domain of instability with a weight. } \label{Fig0}
\end{figure}

Now let us consider the case when the right-hand side of equation \eqref{VEq} has no linear terms in $v$.

\begin{Lem}\label{Lem22}
Let  $1\leq m<n$ be integers such that $\Lambda_k(v) \equiv 0$ for $k<m$ and
\begin{gather*}
\Lambda_m(v)  =  \gamma_{m,s} v^s+\mathcal O(v^{s+1}), \quad \Lambda_j(v)=\mathcal O(v^s), \quad j<n,\\
\Lambda_n(v)=\gamma_{n,d} v^d + \mathcal O(v^{d+1}), \quad \Lambda_i(v)=\mathcal O(v^d), \quad i>n
\end{gather*}
as  $v\to 0$ with $\gamma_{m,s},\gamma_{n,d}={\hbox{\rm const}}\neq 0$, $s,d\in\mathbb Z$, $s,d\geq2$.
\begin{enumerate}
\item In case  $s\leq d$, the equilibrium $(0,0)$ of system \eqref{FulSys} is
        \begin{itemize}
            \item stable if $\gamma_{m,s}<0$;
            \item unstable if $\gamma_{m,s}>0$.
        \end{itemize}
\item In case  $s> d$, the equilibrium $(0,0)$ of system \eqref{FulSys} is
        \begin{itemize}
            \item  stable if  $\gamma_{m,s}<0$ and $\gamma_{n,d}<0$;
            \item unstable if $\gamma_{m,s}>0$ and $\gamma_{n,d}>0$.
        \end{itemize}
    \end{enumerate}
\end{Lem}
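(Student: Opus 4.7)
The plan is to adapt the Lyapunov-function approach used in Lemmas~\ref{Lem1}--\ref{Lem2}, taking $V_N(E,\varphi,t)$ from Section~\ref{sec2} (with $N$ chosen large enough that the remainder $R_{N+1}$ is dominated by both $t^{-m/q}v^s$ and $t^{-n/q}v^d$ on the region of interest) as the Lyapunov function candidate. Substituting the hypothesized asymptotics of $\Lambda_m$, $\Lambda_n$, and the intermediate/tail terms into the right-hand side of \eqref{VEq}, I would show that the total derivative along trajectories of \eqref{FulSys2} satisfies
$$\frac{dV_N}{dt}\Big|_{\eqref{FulSys2}}\equiv \frac{dv}{dt}=t^{-\frac{m}{q}}v^s\bigl[\gamma_{m,s}+r_m(v,\varphi,t)\bigr]+t^{-\frac{n}{q}}v^d\bigl[\gamma_{n,d}+r_n(v,\varphi,t)\bigr],$$
where $|r_m|,|r_n|=\mathcal O(v)+\mathcal O(t^{-1/q})$ uniformly in $\varphi\in\mathbb R$ as $v\to 0^+$, $t\to\infty$; the intermediate contributions $t^{-k/q}\mathcal O(v^s)$ for $m<k<n$ and $t^{-k/q}\mathcal O(v^d)$ for $k>n$ are absorbed into $r_m$, $r_n$ respectively.

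In the first case ($s\leq d$) I would factor out the leading monomial:
$$\frac{dv}{dt}=t^{-\frac{m}{q}}v^s\Bigl[\gamma_{m,s}+r_m+t^{-\frac{n-m}{q}}v^{d-s}\bigl(\gamma_{n,d}+r_n\bigr)\Bigr].$$
Since $n>m$ and $d\geq s$, the extra contribution inside the brackets tends to zero as $v\to 0^+$, $t\to\infty$. Hence for any $\sigma\in(0,1)$ there exist $d_1>0$, $t_1>0$ such that on $v\in(0,d_1]$, $\varphi\in\mathbb R$, $t\geq t_1$, the sign of $dv/dt$ coincides with the sign of $\gamma_{m,s}$, with absolute value bounded below by $(1-\sigma)|\gamma_{m,s}|t^{-m/q}v^s$. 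Integrating this one-sided estimate exactly as in the proof of Lemma~\ref{Lem1} gives stability of $v(t)\equiv 0$ when $\gamma_{m,s}<0$, and instability (via separation of variables in the comparison ODE $\dot w=ct^{-m/q}w^s$, which blows up or leaves any fixed neighbourhood of zero) when $\gamma_{m,s}>0$.

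In the second case ($s>d$), neither term is subordinate to the other, but when $\gamma_{m,s}$ and $\gamma_{n,d}$ share a sign, each bracket is separately sign-definite for $v$ small and $t$ large, so
$$\frac{dv}{dt}\leq -(1-\sigma)\bigl(|\gamma_{m,s}|t^{-\frac{m}{q}}v^s+|\gamma_{n,d}|t^{-\frac{n}{q}}v^d\bigr)\leq 0$$
when both are negative, with the reversed inequality when both are positive. Monotonicity yields stability in the former case; in the latter, integrating the $v^d$-term alone (which is the dominant lower-order term as $v\to 0^+$, since $s>d$) forces $v$ to escape any small neighbourhood of zero, giving instability. Transferring both conclusions back to the original $(x,y)$ variables via \eqref{exch1} and \eqref{Vest} yields the stated statements on the equilibrium $(0,0)$ of \eqref{FulSys}.

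The main technical obstacle is the book-keeping required to verify that the remainders $r_m$, $r_n$ and the truncation error $R_{N+1}$ can be jointly absorbed: the index $N$ must be chosen so that $R_{N+1}$ is negligible relative to \emph{both} $t^{-m/q}v^s$ and $t^{-n/q}v^d$ simultaneously across the admissible range of $v$ and $t$, which is not automatic because $R_{N+1}$ is only known to vanish linearly in $v$ at the origin. The stability arguments themselves reduce to a routine monotonicity/integration exercise once the derivative is cast in the above form, and the instability arguments proceed by comparison with a scalar ODE of the form $\dot w=ct^{-k/q}w^p$ for the appropriate choice of $k\in\{m,n\}$ and $p\in\{s,d\}$, mirroring the corresponding step in the proofs of Lemmas~\ref{Lem1} and~\ref{Lem02}.
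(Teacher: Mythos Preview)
Your approach coincides with the paper's: take $V_N$ as Lyapunov candidate, split $dv/dt$ into a $t^{-m/q}v^s$--piece and a $t^{-n/q}v^d$--piece, and read off sign--definiteness under the stated sign hypotheses on $\gamma_{m,s}$, $\gamma_{n,d}$. The paper simply takes $N=n$ and asserts (in case $s\le d$) the one--line estimate $\displaystyle \frac{dv}{dt}=t^{-m/q}v^s\bigl(\gamma_{m,s}+\mathcal O(v)+\mathcal O(t^{-1/q})\bigr)$, then draws the stability/instability conclusions exactly as you outline; in case $s>d$ it writes the two--term bound $\displaystyle \frac{dv}{dt}\lessgtr \mp(1-\sigma)\,v^{d}\bigl(|\gamma_{m,s}|t^{-m/q}v^{s-d}+|\gamma_{n,d}|t^{-n/q}\bigr)$ and concludes in one sentence.

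The ``main technical obstacle'' you flag---that $R_{N+1}$ is only known to vanish linearly in $v$, so it is not literally $o(t^{-m/q}v^s)$ uniformly down to $v=0$---is a genuine point and the paper glosses over it. It does not, however, require choosing $N$ cleverly: with $N=n$ one has $R_{n+1}=\mathcal O\bigl(v\,t^{-(n+1)/q}\bigr)$ (the coefficients in $R_{N+1}$ are smooth and vanish at $E=0$, so the linear bound carries the $t^{-(n+1)/q}$ factor), and hence on any shell $\epsilon/2\le v\le\epsilon$ the remainder is eventually dominated by $t^{-m/q}v^s$ because $n+1>m$. That barrier argument---the same one used at the end of the proof of Lemma~\ref{Lem2} for the case $q\le m<n$---is what actually yields stability; your monotonicity route is equivalent. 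For the instability direction the same shell estimate gives $dv/dt>0$ on $\delta\le v\le\epsilon$ once $t$ is large (depending on $\delta$), and then the comparison ODE $\dot w=ct^{-m/q}w^s$ drives $w$ past $\epsilon$; this is precisely what the paper has in mind, and your proposal matches it.
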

\begin{proof}
If $s\leq d$, the total derivative of the function $V_n(E,\varphi,t)$ has the asymptotics:
\begin{gather*}
    \frac{dV_n}{dt}\Big|_{\eqref{FulSys2}}\equiv \frac{dv}{dt}=t^{-\frac nq}  v^s (\gamma_{n,s}+\mathcal O(v)+\mathcal O(t^{-\frac 1q}))
\end{gather*}
as $t\to\infty$ and $v\to 0$ for all $\varphi\in\mathbb R$. If $\gamma_{n,s}>0$, the total derivative is locally positive and the solution $E(t)\equiv 0$ of system \eqref{FulSys2} is unstable. In the opposite case, when  $\gamma_{n,s}<0$, the total derivative is locally negative and the trivial solution is stable.

Let $s>d$, $\gamma_{m,s}<0$  and $\gamma_{n,d}<0$. Then for all $\sigma>0$ there exist $0<d_1\leq d_0$ and $t_1\geq t_0 $ such that
\begin{gather*}
\frac{dv}{dt}\leq  - t^{-\frac nq} (1-\sigma)v^{d} \big(|\gamma_{m,s}| v^{s-d}+|\gamma_{n,d}|t^{-\frac{n-m}{q}}\big)\leq  0
\end{gather*}
for all $v\in [0,d_1]$, $t\geq t_1$ and $\varphi\in\mathbb R$. Hence, the solution $E(t)\equiv 0$ to system \eqref{FulSys2} is stable. Similarly, if $\gamma_{m,s}>0$  and $\gamma_{n,d}>0$, the trivial solution is unstable.
\end{proof}

\section{Bifurcations of limit cycles}
\label{sec32}

 Let us show that decaying non-autonomous bifurcations may lead to the appearance of limit cycles.

\begin{Lem}\label{Lem3}
Let $n\geq 1$ be an integer such that $\Lambda_k(v)\equiv 0$ for $k<n$,  $\Lambda_n(v)\not\equiv 0$
and $V_c\in (0,E_0)$ be a real number such that $\Lambda_n(V_c)=0$ and $\Lambda_n'(V_c)< 0$.
Then for all $\epsilon>0$ there exist $\delta_\ast>0$ and $t_\ast>0$ such that $\forall\, (x_0,y_0)$: $|H_0(x_0,y_0)-V_c|\leq \delta_\ast$ and $\tau_0\geq t_\ast$ the solution $x(t)$, $y(t)$ of system \eqref{FulSys} with initial data $x(\tau_0)=x_0$, $y(\tau_0)=y_0$ satisfies the estimate $|H_0(x(t),y(t))- V_c|<\epsilon$ for all $t\geq \tau_0$. Moreover, if $1\leq n\leq q$, $H_0(x(t),y(t))\to V_c $ as $t \to \infty$.
\end{Lem}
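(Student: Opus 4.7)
The plan is to work in the $(v,\varphi)$ coordinates constructed in Section~\ref{sec2}, show invariance (and attractivity when $n\leq q$) of a small neighbourhood of $v=V_c$ for equation \eqref{VEq}, and translate the conclusion to the level sets of $H_0$ via \eqref{exch1} and the construction \eqref{VF}. Taking $N=n$, equation \eqref{VEq} becomes
\[
    \frac{dv}{dt}=t^{-n/q}\Lambda_n(v)+R_{n+1}(v,\varphi,t),\qquad R_{n+1}(v,\varphi,t)=\mathcal O(t^{-(n+1)/q}),
\]
uniformly in $\varphi\in\mathbb R$ and in $v$ on any compact subset of $[0,d_0)$ containing $V_c$.

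First I would shift to $z=v-V_c$. With $\mu:=-\Lambda_n'(V_c)>0$ and a Taylor expansion of $\Lambda_n$ at $V_c$, the equation for $z$ takes the form
\[
    \frac{dz}{dt}=-\mu\, t^{-n/q}\,z+t^{-n/q}\mathcal O(z^2)+r(z,\varphi,t),\qquad |r(z,\varphi,t)|\leq C_0\, t^{-(n+1)/q},
\]
for $|z|\leq z_0$ and $t\geq t_1$ with suitable $z_0,t_1>0$. The principal obstacle is that $\Lambda_{n+1}(V_c)$ need not vanish, so $z\equiv 0$ is \emph{not} a solution; Lyapunov stability of a non-equilibrium reference state must therefore be proved by hand.

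Next I would apply the Lyapunov candidate $\ell(z)=z^2/2$. Absorbing the $\mathcal O(z^2)$ remainder of $\Lambda_n$ into the linear damping by shrinking $z_0$, one obtains, for $|z|\leq z_0$ and $t\geq t_1$,
\[
    \frac{d\ell}{dt}\leq -\mu\, t^{-n/q}\,\ell+C_1\, t^{-(n+1)/q}\,\ell^{1/2}.
\]
Setting $w=\ell^{1/2}$ yields the scalar linear inequality $\dot w\leq -(\mu/2)\,t^{-n/q}\,w+(C_1/2)\,t^{-(n+1)/q}$, which I would integrate using the integrating factor $\Phi(t,\tau_0)=\exp\bigl(\tfrac{\mu}{2}\int_{\tau_0}^{t}s^{-n/q}\,ds\bigr)$. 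Three cases arise: when $n<q$, $\Phi$ grows like a stretched exponential and the bound gives $w(t)=\mathcal O(t^{-1/q})\to 0$; when $n=q$, $\Phi(t,\tau_0)=(t/\tau_0)^{\mu/2}$ and one again obtains $w(t)\to 0$ at a polynomial rate; when $n>q$, $\Phi$ has a finite limit as $t\to\infty$, but the forcing integral $\int_{\tau_0}^{\infty}s^{-(n+1)/q}\,ds=\mathcal O(\tau_0^{1-(n+1)/q})$ is small for large $\tau_0$, yielding the uniform bound $w(t)\leq w(\tau_0)+\mathcal O(\tau_0^{1-(n+1)/q})$ --- Lyapunov stability without attractivity.

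Finally I would return to the original variables. From the construction \eqref{VF} one has $|v-E|=\mathcal O(t^{-1/q})$ uniformly for $E$ in a bounded set and $\varphi\in\mathbb R$, so $|z(t)|<\epsilon/2$ produces $|E(t)-V_c|<\epsilon$ for $t$ large enough, while $z(t)\to 0$ produces $E(t)\to V_c$. Conversely, the hypothesis $|H_0(x_0,y_0)-V_c|=|E(\tau_0)-V_c|\leq\delta_\ast$ combined with \eqref{VF} gives $|z(\tau_0)|\leq\delta_\ast+\mathcal O(t_\ast^{-1/q})$, which lies inside the Lyapunov disc $\{|z|\leq z_0\}$ once $\delta_\ast$ is small and $t_\ast$ is large. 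Since $H_0(x(t),y(t))\equiv E(t)$ by \eqref{exch1}, both conclusions of the lemma follow. The main technical work is the case-by-case integrating-factor analysis across the three regimes of $n/q$; the structural Lyapunov argument is the same in each.
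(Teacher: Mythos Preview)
Your argument is correct and structurally identical to the paper's: pass to $(v,\varphi)$ with $N=n$, shift $z=v-V_c$, and use $\ell(z)=z^2/2$ as a Lyapunov candidate. The only methodological difference is in how the first (stability) claim is extracted. The paper uses a barrier argument: it shows that $d\ell/dt\le -t^{-n/q}\,|\Lambda_n'(V_c)|\,z^2/4$ on the annulus $\delta_\ast\le|z|\le\epsilon$ for $t\ge t_\ast$, with $t_\ast$ chosen so that the $\mathcal O(t^{-(n+1)/q})$ remainder is beaten by the linear damping there; invariance of $\{|z|<\epsilon\}$ then follows immediately, without any integration and without distinguishing the regimes $n\lessgtr q$. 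Your route---substituting $w=\ell^{1/2}$ and solving the resulting linear differential inequality via an integrating factor in the three cases---reaches the same conclusion and additionally yields explicit decay rates, at the cost of a case split and the (standard but slightly delicate) square-root substitution near $\ell=0$. For the attractivity claim when $n\le q$, both proofs reduce to the same Gronwall-type estimate on $\ell$. The return to the original variables via \eqref{VF} and \eqref{exch1} is handled the same way in both.
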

\begin{proof}
Consider the function $V_n(E,\varphi,t)$ on the trajectories of system \eqref{FulSys2}. It follows that  $v(t)=V_n(E(t),\varphi(t),t)$ satisfies the following equation:
\begin{gather*}
\frac{dv}{dt}=t^{-\frac nq}\Big( \Lambda_n(v) + \mathcal O(t^{-\frac 1q})\Big)
\end{gather*}
as $t\to\infty$ for all $v\in [0, d_0]$ and $\varphi\in \mathbb R$. The change of the variable $v(t)=V_c+z(t)$ leads to the following equation:
\begin{gather}
\label{zeq}
\frac{dz}{dt}=t^{-\frac nq}  \Lambda_n(V_c+z) + p(z,\varphi,t),
\end{gather}
where $p(0,\varphi,t)\not\equiv 0$, $|p(z,\varphi,t)|\leq M t^{-{(n+1)}/{q}}$  for all $z\in [-V_c,d_0+V_c] $, $\varphi\in \mathbb R$ and $t\geq t_0$ with positive constant $M>0$.
From $\lambda_n:=\Lambda_n'(V_c)<0$ it follows that the trivial solution of \eqref{zeq} with $p(z,\varphi,t)\equiv 0$ is stable. Let us show that this solution is stable with respect to the perturbation $p(z,\varphi,t)$. Indeed, consider $\ell(z)=z^2/2$ as a Lyapunov function candidate for \eqref{zeq}. Its total derivative has the form:
\begin{gather}
\label{elleq}
\frac{d\ell}{dt}\Big|_{\eqref{zeq}}=t^{-\frac{n}{q}}\Lambda_n(V_c+z)z+p(z,\varphi,t)z.
\end{gather}
First note that there exists $\delta_1>0$ such that $\Lambda_n(V_c+z)z\leq -|\lambda_n| z^2/2$ as $|z|\leq \delta_1$. Let us fix $0<\epsilon<\delta_1$ and choose
\begin{gather*}
\delta_\ast=\frac{\epsilon}{2}, \quad t_\ast=\max\Big\{\Big(\frac{4M}{\delta_\ast |\lambda_n|}\Big)^q, t_0\Big\},
\end{gather*} then
\begin{gather*}
\frac{d\ell}{dt}\leq -t^{-\frac{n}{q}} z^2 \Big(\frac{|\lambda_n|}{2}-M\delta_\ast^{-1} t_\ast^{-\frac{1}{q}} \Big)\leq -t^{-\frac{n}{q}} z^2\frac{|\lambda_n|}{4}
\end{gather*}
for all $\delta_\ast\leq |z|\leq \epsilon$, $\varphi\in\mathbb R$ and $t\geq t_\ast$.
Hence, any solution $z(t)$ with initial data $|z(\tau_0)|\leq \delta_\ast$, $\tau_0\geq t_\ast$ cannot leave the domain $\{|z|<\epsilon\}$ as $t\geq \tau_0$. Returning to the original variables, we obtain the result of the Lemma.

Let us show that $H_0(x(t),y(t))\to V_c$ as $t\to\infty$ if $n\leq q$. From \eqref{elleq} it follows that $d\ell/dt\leq t^{-n/q}(-|\lambda_n|\ell + M \delta_1 t^{-1/q})$ as $t\geq t_0$. By integrating the last inequality in the case $n=q$, we get
\begin{gather*}
0\leq \ell(z(t))\leq \ell (z(t_0)) \Big(\frac{t}{t_0}\Big)^{-|\lambda_n|}+M\delta_1 t^{-|\lambda_n|}\int\limits_{t_0}^t s^{|\lambda_n|-\frac{1}{q}-1}\, ds
\end{gather*}
with $|z(t_0)|\leq \delta_1$. Similar estimates hold in the case $n<q$. Hence, if $n\leq q$, $v(t)\to V_c$ as $t\to\infty$.
\end{proof}

\begin{Lem}
Let $1\leq n\leq q$ be an integer such that $\Lambda_k(v)\equiv 0$ for $k<n$, $\Lambda_n(v)\not\equiv 0$ and $V_c\in (0,E_0)$ be a real number such that $\Lambda_n(V_c)=0$ and $\Lambda_n'(V_c)> 0$. Then there exists $\epsilon>0$ such that for all $ \delta_\ast>0$ $\exists\, (x_0,y_0)$: $|H_0(x_0,y_0)-V_c|\leq \delta$ and $\tau_0>0$ the solution $x(t)$, $y(t)$ of system \eqref{FulSys} with initial data $x(\tau_0)=x_0$, $y(\tau_0)=y_0$ satisfies the estimate $|H_0(x(t),y(t))- V_c|\geq \epsilon$ at some $t\geq \tau_0$.
\end{Lem}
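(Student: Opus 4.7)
The argument parallels that of Lemma~\ref{Lem3}, but now the sign of $\lambda_n:=\Lambda_n'(V_c)>0$ is used to force a Lyapunov function to grow rather than decay. As before, I would use the change of variables $v(t)=V_n(E(t),\varphi(t),t)$ and $z=v-V_c$, so that $z$ satisfies equation~\eqref{zeq} with $|p(z,\varphi,t)|\le Mt^{-(n+1)/q}$ uniformly on compacts in $z$ and for $\varphi\in\mathbb R$, $t\ge t_0$.

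First I would fix $\epsilon>0$ small enough that $\Lambda_n(V_c+z)\,z\ge \tfrac{\lambda_n}{2} z^2$ for all $|z|\le 2\epsilon$ (possible since $\Lambda_n(V_c+z)=\lambda_n z+\mathcal O(z^2)$ with $\lambda_n>0$) and $[V_c-2\epsilon,V_c+2\epsilon]\subset(0,E_0)$. Taking $\ell(z)=z^2/2$ as a Lyapunov function candidate for \eqref{zeq}, its total derivative satisfies
\begin{gather*}
\frac{d\ell}{dt}\Big|_{\eqref{zeq}} \;\ge\; \frac{\lambda_n}{2}\,t^{-n/q}z^2 \;-\; M\,t^{-(n+1)/q}|z|
\end{gather*}
on $\{|z|\le 2\epsilon\}$. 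Confining attention further to $|z|\ge 4M\lambda_n^{-1} t^{-1/q}$, the perturbative term is absorbed and one obtains the key differential inequality
\begin{gather*}
\frac{d\ell}{dt}\Big|_{\eqref{zeq}} \;\ge\; \frac{\lambda_n}{2}\, t^{-n/q}\,\ell.
\end{gather*}

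Next, given an arbitrary $\delta_*>0$, I would set $\delta_0=\min\{\delta_*,\epsilon\}$, pick any $(x_0,y_0)$ on the level curve $\{H_0=V_c+\delta_0\}$ (nonempty by the assumption on level lines), and choose $\tau_0$ so large that $4M\lambda_n^{-1}\tau_0^{-1/q}\le \delta_0/4$ and $|V_n(E,\varphi,\tau_0)-E|\le \delta_0/4$ uniformly for $E\in[V_c-2\epsilon,V_c+2\epsilon]$ and $\varphi\in\mathbb R$; the latter is possible by \eqref{VF}. Then the solution of \eqref{FulSys} starting at $(x_0,y_0)$ at time $\tau_0$ has $z(\tau_0)\ge 3\delta_0/4$, which both lies inside $[0,2\epsilon]$ and exceeds the time-dependent threshold, so the differential inequality applies and in particular $\ell$ is nondecreasing while $z$ remains in the admissible range. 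Integrating from $\tau_0$ to $t$ gives
\begin{gather*}
\ell\bigl(z(t)\bigr)\;\ge\;\ell\bigl(z(\tau_0)\bigr)\,\exp\!\Big(\frac{\lambda_n}{2}\int_{\tau_0}^{t} s^{-n/q}\,ds\Big),
\end{gather*}
and since $1\le n\le q$, the integral diverges as $t\to\infty$ (logarithmically when $n=q$, as a positive power of $t$ when $n<q$). Hence $\ell(z(t))$ cannot stay bounded, so there is a first $t^\ast\ge\tau_0$ with $|z(t^\ast)|=2\epsilon$. Returning to the original variables via $H_0(x,y)=E=v+\mathcal O(t^{-1/q})=V_c+z+\mathcal O(t^{-1/q})$ and the choice of $\tau_0$ yields $|H_0(x(t^\ast),y(t^\ast))-V_c|\ge 2\epsilon-\delta_0/4\ge\epsilon$, which is the conclusion.

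The main obstacle is that $p(0,\varphi,t)\not\equiv 0$, so $z\equiv 0$ is \emph{not} a solution of the shifted equation and a direct Chetaev-type instability theorem does not apply. The remedy is precisely the threshold $|z|\gtrsim t^{-1/q}$ above which the unstable linear contribution overwhelms the non-vanishing perturbation; by taking $\tau_0$ sufficiently large this threshold drops below any prescribed initial deviation $\delta_*$, so arbitrarily small initial perturbations of the level set $\{H_0=V_c\}$ produce orbits that escape the $\epsilon$-neighborhood. The restriction $n\le q$ is used solely to guarantee divergence of $\int s^{-n/q}\,ds$; if $n>q$, the Lyapunov function could remain bounded and the argument breaks down, consistent with Lemma~\ref{Lem1} where $n>q$ corresponds to marginal behavior.
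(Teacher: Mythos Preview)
Your proposal is correct and follows essentially the same route as the paper: both use the shifted variable $z=v-V_c$, the Lyapunov candidate $\ell=z^2/2$, and absorb the non-vanishing perturbation $p$ by taking $\tau_0$ large enough relative to $\delta_\ast$, then exploit the divergence of $\int s^{-n/q}\,ds$ for $n\le q$ to force escape. The only cosmetic difference is that the paper uses the cruder constant lower bound $d\ell/dt\ge t^{-n/q}\delta_\ast^2|\lambda_n|/4$ on the fixed annulus $\delta_\ast\le|z|<\delta_1$, whereas you use the Gr\"onwall-type bound $d\ell/dt\ge \tfrac{\lambda_n}{2}t^{-n/q}\ell$ on a time-dependent region; both integrate to unbounded growth and yield the same conclusion.
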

\begin{proof}
From \eqref{elleq} it follows that
\begin{gather*}
\frac{d\ell}{dt}\geq t^{-\frac{n}{q}} z^2 \Big(\frac{|\lambda_n|}{2}-M\delta_\ast^{-1} \tau_0^{-\frac 1q}\Big)
\end{gather*}
for all $\delta_\ast\leq |z|<\delta_1$, $\varphi\in\mathbb R$ and $t\geq \tau_0$. We choose $\tau_0=(2M\delta_\ast/|\lambda_n|)^q$ so that $d\ell/dt\geq t^{-n/q}\delta_\ast^2|\lambda_n|/4$. By integrating the last inequality in the case $n=q$, we get $\ell(z(t))\geq \delta_\ast^2/4(2+|\lambda_n| \log(t/\tau_0))$, where $z(\tau_0)=\delta_\ast$. Thus, there exists $0<\epsilon<\delta_1$ such that for all $\delta_\ast<\epsilon$ the solution $z(t)$ satisfies the estimate $|z(t)|>\epsilon$ at
\begin{gather*}
t=2\tau_0 \exp\Big(\frac{2\epsilon^2}{\delta_\ast^2 (2+|\lambda_n|)}\Big).
\end{gather*}
Similar estimate holds in the case $n<q$.
\end{proof}

\begin{Cor}
Let $1\leq n\leq q$ be an integer such that in $\Lambda_k(v)\equiv 0$ for $ k<n$, $\Lambda_n(v)\not\equiv 0$ and $V_c^l\in (0,E_0)$, $l=1,\dots,s$, $s\geq 1$ be a set of real numbers such that $\Lambda_n(V_c^l)=0$, $\Lambda_n'(V_c^l)< 0$. Then there exist a set $\{(x,y)\in\mathbb R^2: H_0(x,y)\equiv V_c^l, l=1,\dots,s\}$ of stable limit cycles of system \eqref{FulSys}.
\end{Cor}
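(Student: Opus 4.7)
The statement is a direct consequence of Lemma~\ref{Lem3}, applied separately to each of the finitely many critical values $V_c^l$. The plan is as follows.

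For each fixed $l\in\{1,\dots,s\}$, invoke Lemma~\ref{Lem3} at the point $V_c^l$: the hypotheses $\Lambda_n(V_c^l)=0$, $\Lambda_n'(V_c^l)<0$ together with $1\le n\le q$ are exactly the hypotheses required. This yields constants $\epsilon_l>0$, $\delta_l>0$ and $t_l>0$ such that every trajectory $(x(t),y(t))$ of \eqref{FulSys} with initial data $(x(\tau_0),y(\tau_0))=(x_0,y_0)$ satisfying $\tau_0\ge t_l$ and $|H_0(x_0,y_0)-V_c^l|\le\delta_l$ stays in the tubular neighbourhood $\{|H_0(x,y)-V_c^l|<\epsilon_l\}$ for all $t\ge\tau_0$ and, moreover, satisfies $H_0(x(t),y(t))\to V_c^l$ as $t\to\infty$ (the convergence statement is precisely where $n\le q$ is used).

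Next, I would verify that the sets $\Gamma_l:=\{(x,y)\in\mathbb R^2:H_0(x,y)=V_c^l\}$ are indeed closed curves on which to speak of ``limit cycles''. This uses the standing assumption (stated after \eqref{H0}) that the level lines $\{H_0=E\}$ are closed curves for all $E\in(0,E_0]$, together with $V_c^l\in(0,E_0)$. Since the zeros $V_c^l$ of $\Lambda_n$ are isolated (the derivative $\Lambda_n'(V_c^l)$ is nonzero), one may shrink the $\epsilon_l$ so that the tubular neighbourhoods $\{|H_0-V_c^l|<\epsilon_l\}$ are pairwise disjoint; this shows the cycles are distinct and each one is a local attractor for \eqref{FulSys}.

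The combination of local forward invariance of a neighbourhood of $\Gamma_l$ with the convergence $H_0(x(t),y(t))\to V_c^l$ is precisely the statement that $\Gamma_l$ is a stable limit cycle of the (non-autonomous) system \eqref{FulSys} in the asymptotic sense appropriate for the asymptotically autonomous setting of the paper. No genuine obstacle arises: the whole work has already been done in Lemma~\ref{Lem3}, and the corollary is just the multi-cycle packaging of that lemma. The only point requiring a moment of care is the translation between ``$v(t)\to V_c^l$'' (the statement for the transformed Lyapunov variable used in Lemma~\ref{Lem3}) and ``$H_0(x(t),y(t))\to V_c^l$''; this is immediate from $v=V_n(E,\varphi,t)=E+\mathcal O(t^{-1/q})$ together with $E=H_0(x,y)$ via the reversible change of variables \eqref{exch1}.
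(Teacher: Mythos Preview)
Your proposal is correct and matches the paper's approach: the paper states this as a corollary without proof, treating it as an immediate consequence of Lemma~\ref{Lem3} applied to each $V_c^l$ separately. Your additional remarks about disjointness of the tubular neighbourhoods and the translation between $v$ and $H_0$ are accurate elaborations of points the paper leaves implicit.
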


\section{Applications}
\label{sec4}

In the previous two sections, we described  possible asymptotics regimes in systems of the form \eqref{FulSys}. In this section, we give some conditions on the perturbations which guarantee the applicability of these results.

\begin{Th}
\label{Th1}
Let $l, h, n$ be positive integers such that $l+h\geq n$ and the coefficients of the perturbations \eqref{HF} satisfy the following conditions:
\begin{align}
    \label{cond00}
&        H_i(x,y)\equiv 0, \quad 1\leq i<h,\quad F_j(x,y)\equiv 0, \quad 1\leq j<l; \\
    \label{cond01}
&       \oint\limits_{H_0(x,y)=E} F_{k}(x,y) \partial_y H_0(x,y)\, dl =0, \quad \forall\, E \in (0,E_0), \quad l\leq k< n;\\
    \label{cond02}
&       F_{k}(x,y)=\mathcal O(r^2), \quad l\leq k< n, \quad  F_n(x,y)=\lambda_n y+\mathcal O(r^2), \quad r\to 0,
\end{align}
where $\lambda_n=\text{const}\neq 0$. Then the equilibrium $(0,0)$ of system \eqref{FulSys} is unstable if $\lambda_n>0$ and is stable if $\lambda_n<0$.
Moreover, if $\lambda_n<0$ and $n<q$ ($n=q$), the equilibrium is exponentially (polynomially) stable.
\end{Th}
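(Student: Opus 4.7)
The plan is to reduce to Lemma~\ref{Lem1} by establishing, under conditions \eqref{cond00}--\eqref{cond02}, the two normal-form requirements $\Lambda_k(v)\equiv 0$ for $1\leq k<n$ and $\Lambda_n(v)=\lambda_n v+\mathcal O(v^2)$ as $v\to 0$. The stability/instability conclusion, including the exponential versus polynomial decay rates according to $n<q$ or $n=q$, is then supplied directly by Lemma~\ref{Lem1}.

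The first step is to exploit the identity $\omega(E)\partial_\varphi X(\varphi,E)=\partial_y H_0(X(\varphi,E),Y(\varphi,E))$, which comes from the defining system for $X,Y$, to rewrite $f_k$ from \eqref{fg} as
$$
f_k(E,\varphi)=-\omega(E)\,\partial_\varphi\bigl[H_k(X,Y)\bigr]+F_k(X,Y)\,\partial_y H_0(X,Y).
$$
The $\varphi$-average of the first summand vanishes as a total derivative of a $2\pi$-periodic function, while the change of variables $\varphi=\omega(E)\tau$ identifies the average of the second summand with the closed line integral in \eqref{cond01}. Combined with \eqref{cond00}, this forces $\langle f_k\rangle\equiv 0$ for every $1\leq k<n$.

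Second, I would run a strong induction on $k<n$ to prove $\Lambda_k\equiv 0$ and simultaneously construct the coefficients $v_k(E,\varphi)$ as smooth $2\pi$-periodic functions vanishing at $E=0$. Under the inductive hypothesis $\Lambda_j\equiv 0$ for $j<k$, every term of the first sum in the general expression for $Z_k$ disappears (each being proportional to $\partial_E^m\Lambda_j$), leaving only the commutator contributions $f_j\partial_E v_i+g_j\partial_\varphi v_i$ with $i+j=k$, and the term $\tfrac{k-q}{q}v_{k-q}$. Using the same decomposition of $f_j$ as in the previous step, integration by parts in $\varphi$, and the vanishing supplied by \eqref{cond00}--\eqref{cond01}, one verifies that the averages of the surviving pieces cancel, so that $\Lambda_k=\langle f_k\rangle-\langle Z_k\rangle\equiv 0$.

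Finally, at $k=n$ the leading order of $\Lambda_n$ follows from \eqref{cond02}: $F_n(X,Y)\partial_y H_0(X,Y)=\lambda_n Y^2+\mathcal O(r^3)$, where the action--angle expansion $X=\sqrt{2E}\cos\varphi+\mathcal O(E)$, $Y=-\sqrt{2E}\sin\varphi+\mathcal O(E)$ (implied by \eqref{H0}) carries a reflection symmetry that eliminates odd half-integer powers of $E$ in $\varphi$-averages of even polynomials in $(X,Y)$; thus $\langle Y^2\rangle=E+\mathcal O(E^2)$ and $\langle\mathcal O(r^3)\rangle=\mathcal O(E^2)$. A parallel estimate $\langle Z_n\rangle=\mathcal O(E^2)$ comes from the structure built in the induction, hence $\Lambda_n(v)=\lambda_n v+\mathcal O(v^2)$, and Lemma~\ref{Lem1} completes the argument. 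The hardest part will be the combinatorial bookkeeping in the inductive verification of $\langle Z_k\rangle\equiv 0$, especially the interplay between the $H_j$-part and $F_j$-part of $f_j$ when $h\neq l$ and the handling of the $\tfrac{k-q}{q}v_{k-q}$ shift; a secondary subtlety is the parity argument ruling out half-integer powers of $\sqrt E$ in the leading averages, which is what produces the clean coefficient $\lambda_n$ in $\Lambda_n$ rather than a mixture involving lower-order Hamiltonian perturbations.
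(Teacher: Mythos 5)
Your overall strategy coincides with the paper's: verify $\Lambda_k\equiv 0$ for $k<n$ and $\Lambda_n(v)=\lambda_n v+\mathcal O(v^2)$, then invoke Lemma~\ref{Lem1}. Your first step is sound and matches the paper: writing $f_k=-\omega\,\partial_\varphi\bigl[H_k(X,Y)\bigr]+F_k(X,Y)\,\partial_yH_0(X,Y)$ and using \eqref{cond00}--\eqref{cond01} does give $\langle f_k\rangle\equiv 0$ for all $k<n$, and your computation of the leading term of $\Lambda_n$ from \eqref{cond02} via the action--angle expansion is the same as the paper's.

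The gap is the middle step. You assert that ``one verifies that the averages of the surviving pieces cancel,'' i.e.\ $\langle Z_k\rangle\equiv 0$ for $l\le k<n$, by ``integration by parts in $\varphi$'' --- but this is the crux of the theorem, you never say why the cancellation happens, and you never invoke the hypothesis $l+h\ge n$, without which the claim fails. The actual mechanism is algebraic, not an averaging identity. For $k\le n-1$, any term $f_j\partial_E v_i+g_j\partial_\varphi v_i$ of $Z_k$ in which $f_j,g_j$ carry an $F$-contribution must have $j\ge l$, hence $i=k-j\le n-1-l\le h-1$ and $v_i\equiv 0$ by \eqref{cond00}; symmetrically, if $i\ge l$ then $j<h$ and $f_j,g_j\equiv 0$. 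So only pairs with $i,j<l$ survive, for which $f_j=-\omega\,\partial_\varphi H_j$, $g_j=\omega\,\partial_E H_j$ and $v_i=H_i(X,Y)$, and the sum $\omega\sum_{i+j=k}\bigl(\partial_\varphi H_j\,\partial_E H_i-\partial_E H_j\,\partial_\varphi H_i\bigr)$ vanishes \emph{identically} by antisymmetry in $(i,j)$ --- no averaging or integration by parts is involved, and indeed the averages of the individual summands (e.g.\ $\langle g_j\partial_\varphi v_i\rangle$) are generically nonzero, so a term-by-term integration-by-parts argument would not close. Thus $Z_k\equiv 0$ pointwise for $k<n$ (modulo the $\frac{k-q}{q}v_{k-q}$ shift you rightly flag, which the paper's proof also drops and which is zero whenever $k<h+q$), and $\Lambda_k=\langle f_k\rangle=0$. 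Your inductive framework is compatible with this, but as written the proposal defers precisely the step that requires the key idea and the key hypothesis.
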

\begin{proof}
Let us show that there exists a transformation $(x,y)\mapsto (E,\varphi)\mapsto (v,\varphi)$ such that equation \eqref{VEq} has $\Lambda_k(v)\equiv 0$ for $k<n$ and $\Lambda_n(v)=\lambda_n v + \mathcal O(v^2)$ as $v\to 0$. In this case, Lemma~\ref{Lem1} is applicable.

To be definite, let $h<l$. The proof in the case $h\geq l$ is similar. From \eqref{cond00} it follows that $f_k(E,\varphi)\equiv 0$, $g_k(E,\varphi)\equiv 0$ for $1\leq k<h$;
\begin{eqnarray*}
    f_k(E,\varphi)\equiv -\omega(E) \partial_\varphi H_{k}(X(\varphi,E),Y(\varphi,E)), \ \
    g_k(E,\varphi)\equiv \omega(E) \partial_E H_k(X(\varphi,E),Y(\varphi,E))
\end{eqnarray*}
for $h\leq k<l$.
The functions $f_k(E,\varphi)$, $g_k(E,\varphi)$ have the form \eqref{fg} for $l\leq k\leq n$. Hence, a Lyapunov function candidate for system \eqref{FulSys} should be considered in the following form:
\begin{gather}
\label{VFm-l}
V(E,\varphi,t)=E+\sum_{k=h}^n t^{-\frac k q} v_k(E,\varphi).
\end{gather}
We assume that $v_i(E,\varphi)\equiv 0$ for $i<h$. According to the scheme described in section~\ref{sec2}, the functions  $v_k(E,\varphi)$ are determined from system \eqref{RSys}.
Therefore, for $h\leq k<l$, we have
\begin{gather*}
Z_{k}(E,\varphi)\equiv 0, \quad \Lambda_{k}(E)\equiv \langle f_{k} (E,\varphi)\rangle \equiv 0, \quad  v_{k}(E,\varphi)\equiv H_{k}(X(\varphi,E),Y(\varphi,E)).
\end{gather*}

For $l\leq k\leq n-1$, condition \eqref{cond01} is used to guarantee the equality $\Lambda_k(E)\equiv0$. Indeed, from \eqref{DFS} it follows that
\begin{eqnarray*}
Z_{l}(E,\varphi)&=&-\sum_{i+j=l}\Big(  f_{i}(E,\varphi)\partial_E v_{j} (E,\varphi) + g_{i}(E,\varphi) \partial_\varphi v_{j}(E,\varphi)\Big)\\
&=&-\sum_{j=h}^{l-h}\Big(   f_{l-j}(E,\varphi)\partial_E v_{j} (E,\varphi)+ g_{l-j}(E,\varphi)\partial_\varphi v_{j}(E,\varphi) \Big)\\
& = &\omega(E)\sum_{j=h}^{l-h}\Big( \partial_\varphi H_{l-j}\partial_E H_{j} -\partial_E H_{l-j}\partial_\varphi H_j  \Big) \equiv 0.
\end{eqnarray*}
Hence,
\begin{align*}
 \Lambda_{l}(E)\equiv   \langle f_{l} (E,\varphi)\rangle &\equiv \omega(E) \langle   F_{l}(X(\varphi,E),Y(\varphi,E))  \partial_\varphi X (\varphi,E)\rangle \\
 &\equiv \langle   \partial_y H_0(X(\varphi,E),Y(\varphi,E))   F_{l}(X(\varphi,E),Y(\varphi,E)) \rangle \equiv 0
\end{align*}
and $v_{l}(E,\varphi)\equiv H_{l}(X(\varphi,E),Y(\varphi,E))+\hat v_l(E,\varphi)$, where
\begin{gather*}
 \hat v_l(E,\varphi)=\int  F_{l}(X(\varphi,E),Y(\varphi,E)) \partial_\varphi X (\varphi,E) \, d\varphi.
\end{gather*}

For $l +1\leq k\leq n-1$, we have
\begin{eqnarray*}
Z_{k}(E,\varphi)&=&-\sum_{i+j=k}\Big(  f_{i}(E,\varphi) \partial_E v_{j} (E,\varphi)+ g_{i}(E,\varphi)\partial_\varphi v_{j}(E,\varphi) \Big).
\end{eqnarray*}
Note that $f_j$ and $g_j$ with $j\geq l$ are not involved in these sums.  Such functions have the multipliers $\partial_\varphi v_{k-j}$ and $\partial_E v_{k-j}$, correspondingly. If $j \geq l$, then $k-j\leq n-j-1\leq n-l-1\leq h-1$ and $v_{k-j}(E,\varphi)\equiv 0$. Similarly, the functions $\partial_\varphi v_{j}$ and $\partial_\varphi v_{j}$ with $j\geq l$ are not contained in these sums. Therefore,
\begin{eqnarray*}
Z_{k}(E,\varphi)&=&-\sum_{\substack{i+j=k\\ h\leq i,j< l } } \Big( f_{i}(E,\varphi) \partial_E v_{j} (E,\varphi) + g_{i}(E,\varphi)\partial_\varphi v_{j}(E,\varphi) \Big)\\
&=&-\sum_{j=h}^{k-h}\Big(  f_{k-j}(E,\varphi) \partial_E v_{j} (E,\varphi)+ g_{k-j}(E,\varphi) \partial_\varphi v_{j}(E,\varphi) \Big)\\
& = &\omega(E)\sum_{j=h}^{k-h}\Big(\partial_\varphi H_{k-j}\partial_E H_{j}  -\partial_E H_{k-j}\partial_\varphi H_j  \Big) \equiv 0
\end{eqnarray*}
and
$\Lambda_k(E)=0$. This implies that $v_{k}(E,\varphi)\equiv H_{k}(X(\varphi,E),Y(\varphi,E))+\hat v_k(E,\varphi)$, where
\begin{gather*}
  \hat v_k(E,\varphi)=\int  F_{k}(X(\varphi,E),Y(\varphi,E)) \partial_\varphi X(\varphi,E) \, d\varphi.
\end{gather*}
We see that $v_k(E,\varphi)=\mathcal O(E)$, $\hat v_k(E,\varphi)=\mathcal O(E^{3/2})$ as $E\to 0$ for all $\varphi\in\mathbb R$.

For $k=n$, the function $Z_n(E,\varphi)$ has the following form:
\begin{eqnarray*}
Z_{n}(E,\varphi)&=&-\sum_{\substack{i+j=n\\ h\leq i,j\leq l } } \Big(  f_{i}(E,\varphi)\partial_E v_{j} (E,\varphi) +g_{i}(E,\varphi) \partial_\varphi v_{j}(E,\varphi) \Big)\\
&=& -\sum_{\substack{i\in \{l,n-l\} }} \Big( f_{i}(E,\varphi) \partial_E v_{n-i} (E,\varphi) + g_{i}(E,\varphi)\partial_\varphi v_{n-i}(E,\varphi) \Big)\\
& & -\sum_{\substack{i+j=n\\ h\leq i,j\leq l-1 } } \Big(  f_{i}(E,\varphi)\partial_E v_{j} (E,\varphi) + g_{i}(E,\varphi)\partial_\varphi v_{j}(E,\varphi) \Big)\\
&\equiv & Z_n^1(E,\varphi) + Z_n^2(E,\varphi).
\end{eqnarray*}
If  $l+h>n$, then $Z_n^1(E,\varphi)\equiv 0$. If $l+h=n$,
\begin{gather}
\label{zn1}
Z_n^1(E,\varphi)  = \omega(E)\Big(\partial_\varphi H_{h} (\partial_E \hat v_l +  F_l \partial_E X)-\partial_E H_{h}(\partial_\varphi \hat v_l + F_l \partial_\varphi X )\Big).
\end{gather}
It can easily be checked that
\begin{gather}
\label{zn2}
Z_n^2(E,\varphi)= \omega(E)\sum_{\substack{i+j=n\\ h\leq i,j\leq l-1 }}\Big( - \partial_\varphi H_j  \partial_E H_{i} +\partial_E H_j  \partial_\varphi H_{i}\Big) \equiv 0.
\end{gather}
Consequently,
\begin{gather*}
\Lambda_n(E)=\omega(E)\langle  F_n(X(\varphi,E),Y(\varphi,E)) \partial_\varphi X(\varphi,E) - Z_n(E,\varphi)\rangle
\end{gather*}
and $v_n(E,\varphi)=H_n(E,\varphi)+\hat v_n(E,\varphi)$, where
\begin{gather*}
\hat v_n(E,\varphi)= \frac{1}{\omega(E)}\int \Big( \Lambda_n(E)+Z_n(E,\varphi) -\omega(E) F(X(\varphi,E),Y(\varphi,E)) \partial_\varphi X(\varphi,E) \Big)\, d\varphi.
\end{gather*}
Since $H_0(X(\varphi,E),Y(\varphi,E))\equiv E$, then from  \eqref{H0} and \eqref{cond02} it follows that
\begin{gather*}
X(\varphi,E)= - \sqrt {2 E }\cos\varphi+\mathcal O(E), \quad Y(\varphi,E)= \sqrt{2 E} \sin \varphi+\mathcal O(E), \\
F_n(X(\varphi,E),Y(\varphi,E))=\lambda_n \sqrt{2 E} \sin\varphi + \mathcal O(E),\quad Z_n(E,\varphi)=\mathcal O(E^{3/2})
\end{gather*}
as $E\to 0$ for all $\varphi\in \mathbb R$.
Thus
\begin{eqnarray*}
\Lambda_n(E)&=&\omega(E)\Big\langle \frac{1}{\omega(E)} (\sqrt{2 E} \sin\varphi + \mathcal O(E))( \lambda_n \sqrt{2 E} \sin\varphi + \mathcal O(E)) -Z_n(E,\varphi)\Big\rangle \\
& =& \lambda_n E +\mathcal O(E^2), \quad E\to 0.
\end{eqnarray*}
This implies that system \eqref{FulSys} satisfies the conditions of Lemma~\ref{Lem1} and the stability of the equilibrium is determined by the sign of $\lambda_n$.
\end{proof}

{\bf Example 1}. The system
\begin{gather}
\label{ex1}
\frac{dx}{dt}=y, \quad \frac{dy}{dt}=-\sin x +t^{-\frac l q} \kappa_l y \sin x + t^{-\frac nq} \lambda_n y,\quad t\geq 1,
\end{gather}
where $ \kappa_l$, $\lambda_n={\hbox{\rm const}}$, $0<l<n$, has the form of \eqref{FulSys} with  $H_0(x,y)=1-\cos x+ y^2/2$, $F_l(x,y)\equiv  \kappa_l  y \sin x$, $F_n(x,y)\equiv \lambda_n y$, $H_i(x,y)\equiv 0$ for $i\neq 0$ and $F_j(x,y)\equiv 0$ for $j\not\in\{n,m\}$.  It can easily be checked that system \eqref{ex1} satisfies the conditions of Theorem~\ref{Th1} with $h=\infty$. Therefore, the stability of the equilibrium $(0,0)$ is determined by the parameter $\lambda_n$ and the ratio $n/q$ (see Fig.~\ref{Fig1}).

\begin{figure}
\centering
\subfigure[$\displaystyle \frac n q=\frac 12$, $\lambda_n=-1$]{\includegraphics[width=0.3\linewidth]{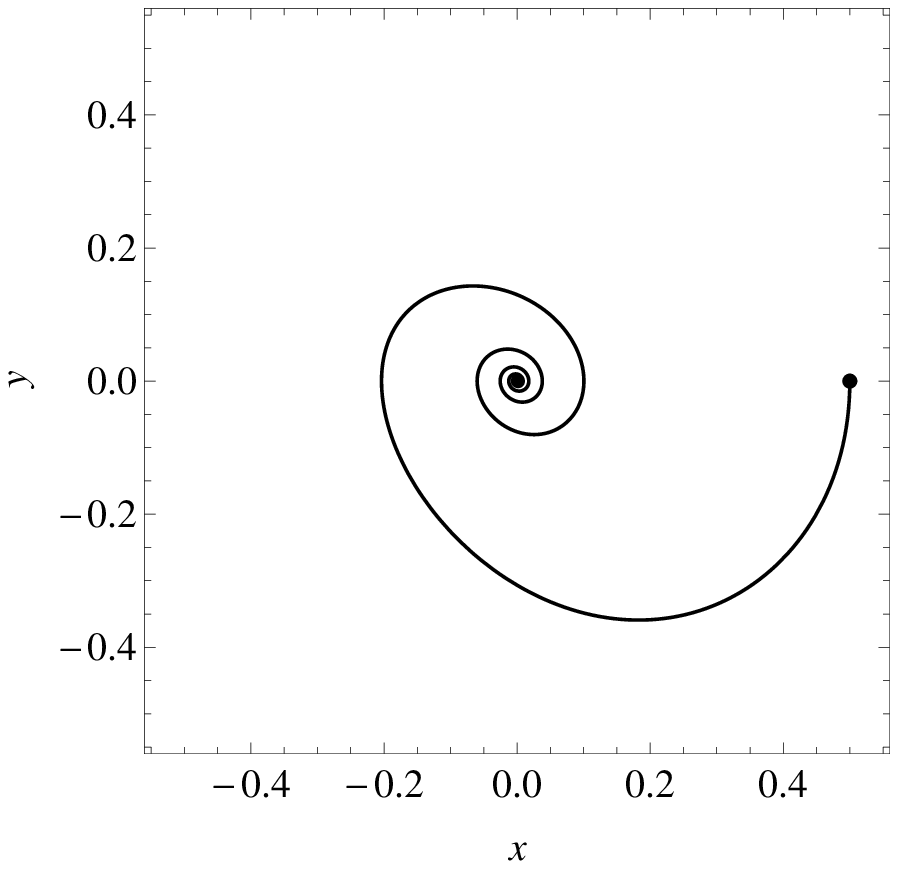}} \hspace{3ex}
\subfigure[$\displaystyle \frac n q=1$, $\lambda_n=-1$]{\includegraphics[width=0.3\linewidth]{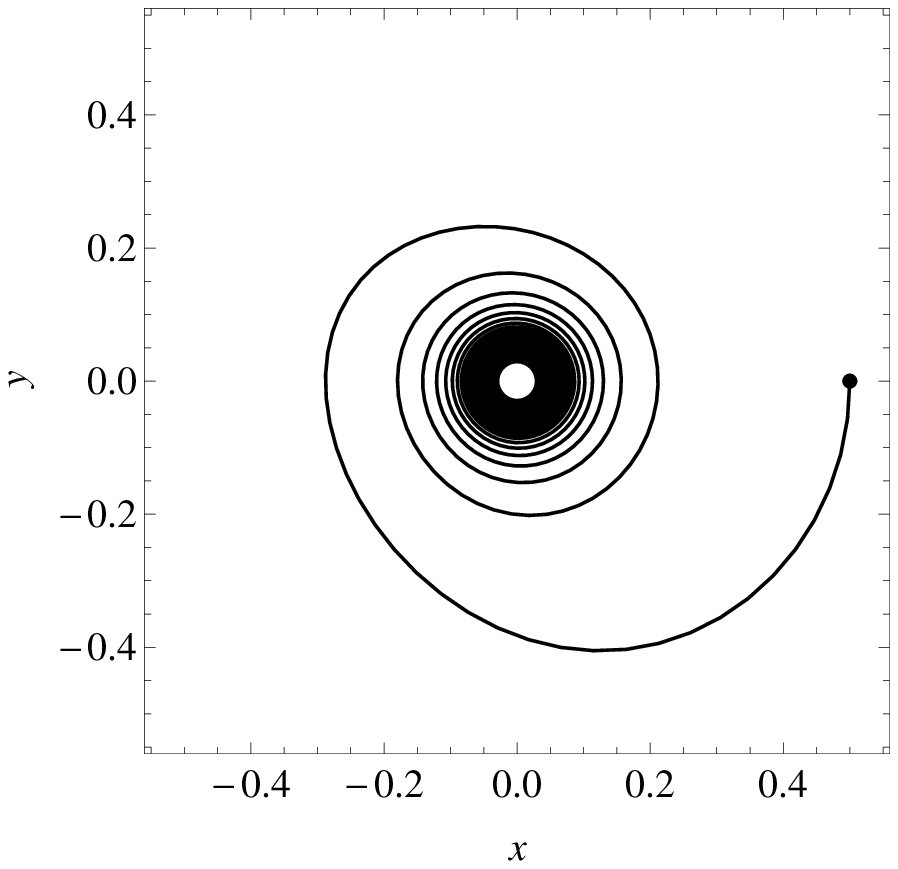}} \hspace{3ex}
\subfigure[$\displaystyle \frac n q= \frac 3 2$, $\lambda_n=-1$]{\includegraphics[width=0.3\linewidth]{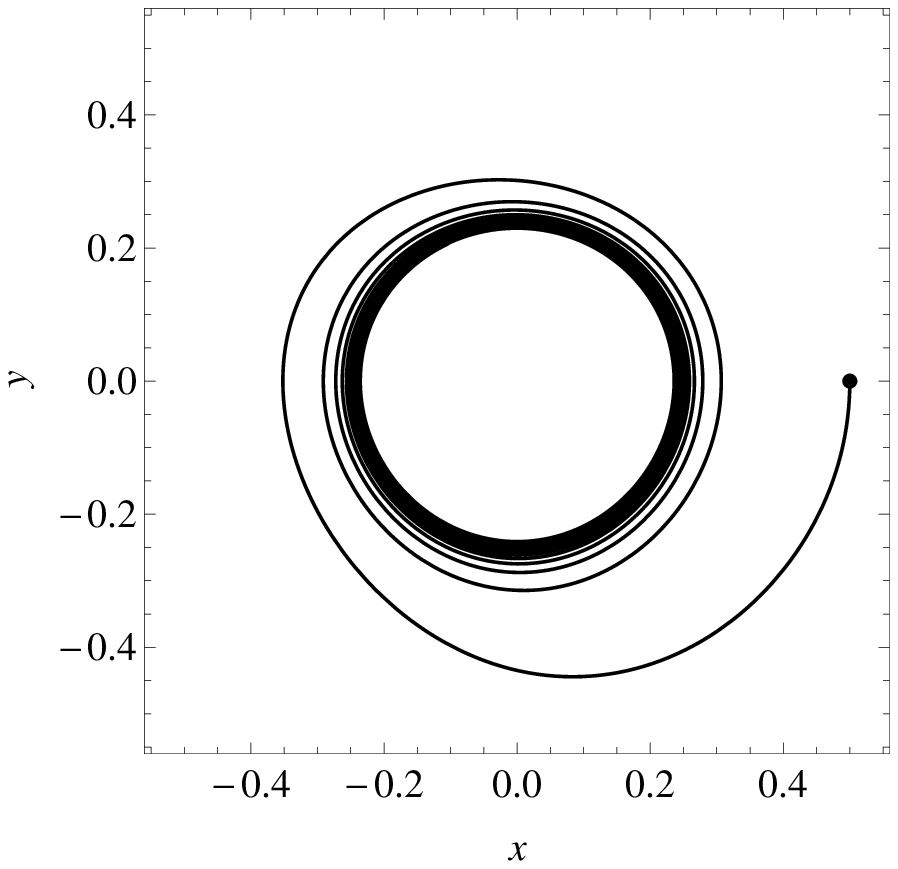}} \\
\centering
\subfigure[$\displaystyle \frac n q= \frac 12$, $\lambda_n=0.3$]{\includegraphics[width=0.3\linewidth]{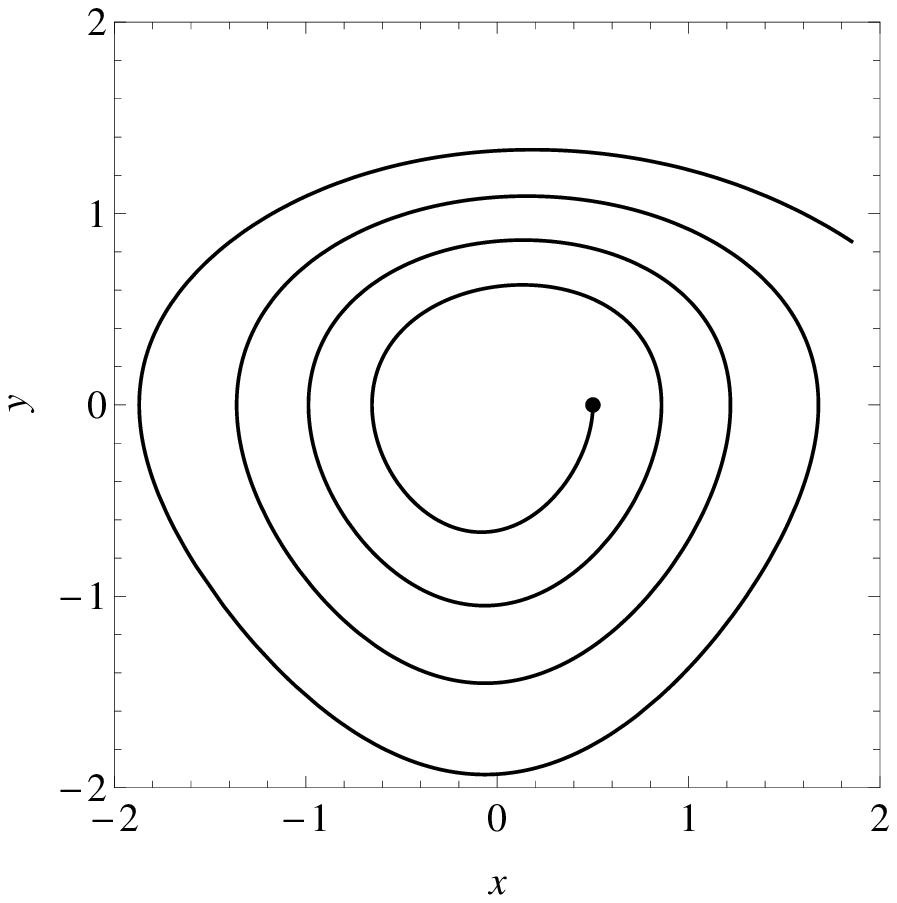}} \hspace{3ex}
\subfigure[$\displaystyle \frac n q=1$, $\lambda_n=0.3$]{\includegraphics[width=0.3\linewidth]{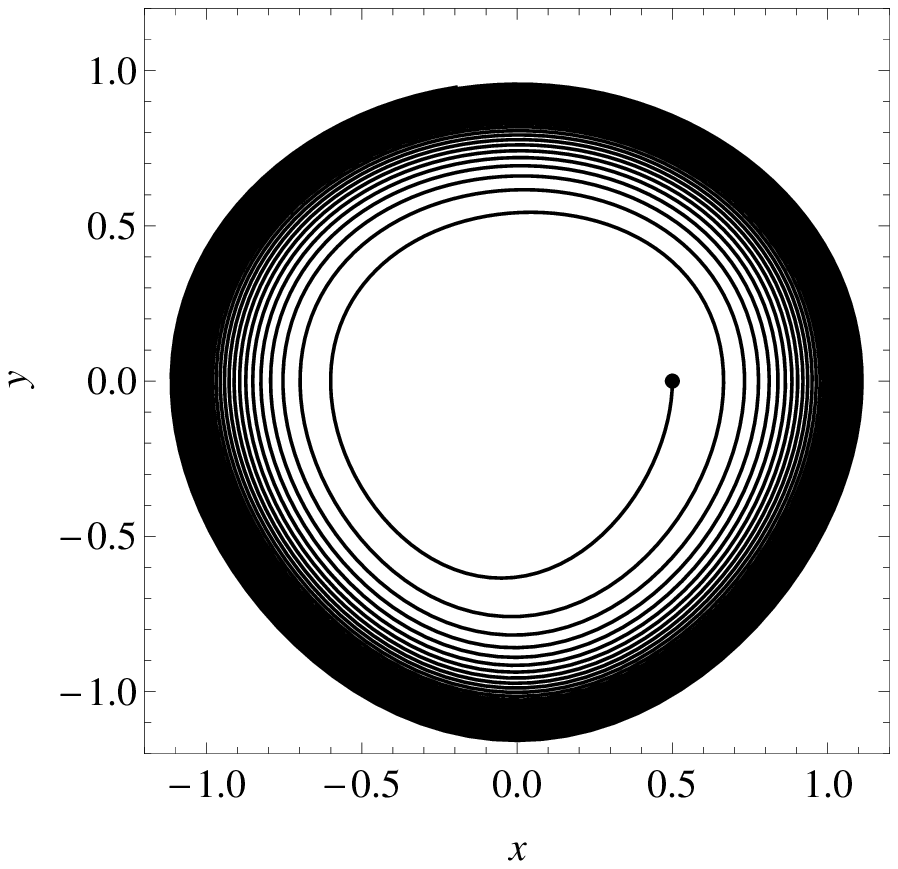}} \hspace{3ex}
\subfigure[$\displaystyle \frac n q=\frac 3 2$, $\lambda_n=0.3$]{\includegraphics[width=0.3\linewidth]{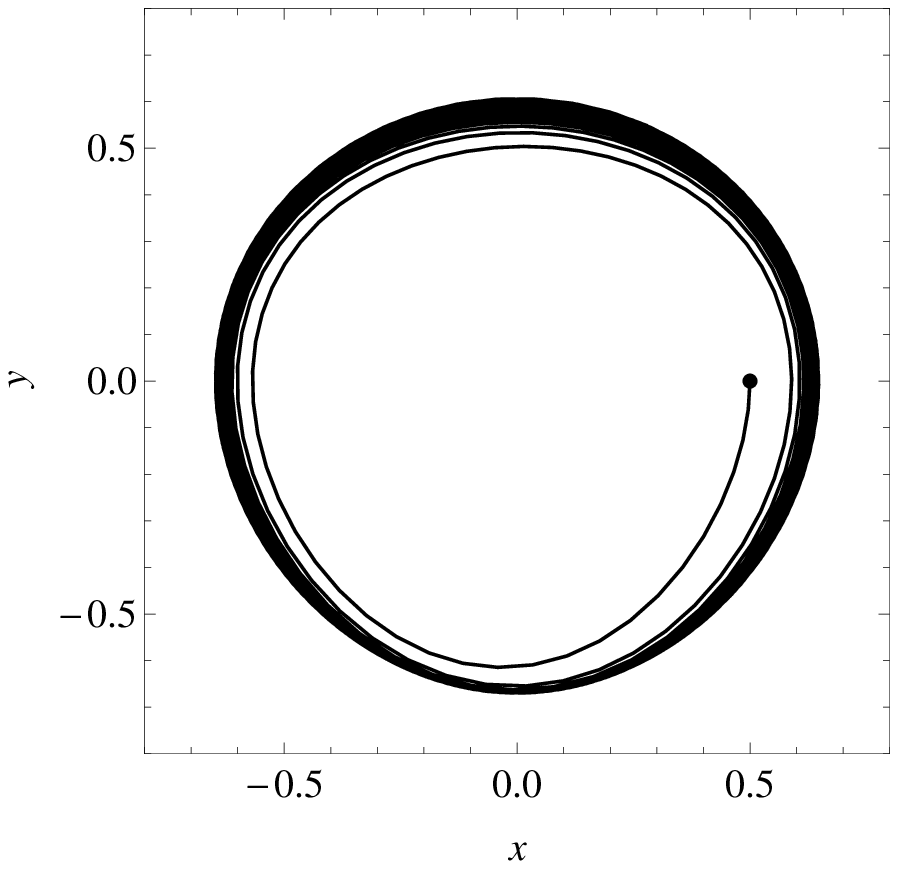}}
\caption{\small The evolution of $(x(t),y(t))$ for solutions of \eqref{ex1} with $q=4$, $l=1$, $\kappa_l=1$. The black points correspond to initial data $(0.5, 0)$. } \label{Fig1}
\end{figure}

\begin{Th}
\label{Th2}
Let $l, h, n, m$ be positive integers such that $l+h\geq n$, $n>m\geq l$ and the coefficients of the perturbations \eqref{HF} satisfy \eqref{cond00}, \eqref{cond02} and
\begin{gather*}
\oint\limits_{H_0(x,y)=E} F_{k}(x,y) \partial_y H_0(x,y)\, dl =0,\quad \forall\, E\in (0,E_0), \quad  l\leq  k<m, \\
F_m(x,y)= y (\alpha_m x^2 + \beta_m y^2)+\mathcal O(r^4), \quad r\to 0,
\end{gather*}
where $\alpha_m,\beta_m={\hbox{\rm const}}$. The equilibrium of system \eqref{FulSys} is
\begin{itemize}
  \item  stable if $\lambda_n<0$ and $\alpha_m+3\beta_m<0$;
  \item  unstable if $\lambda_n>0$ and $\alpha_m+3\beta_m>0$.
\end{itemize}
\end{Th}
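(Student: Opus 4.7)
The plan is to construct the Lyapunov function candidate $V_n(E,\varphi,t)$ of the form \eqref{VFm-l} via the reduction of Section \ref{sec2}, exactly as in the proof of Theorem \ref{Th1}, and then verify that the resulting averaged coefficients $\Lambda_k(v)$ fit the hypotheses of Lemma \ref{Lem02} with $s=2$ and $\gamma_{m,s}=(\alpha_m+3\beta_m)/2$. Once these hypotheses are in place, the stated stability/instability dichotomy is immediate from that lemma.

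For $k<m$ I would argue as in Theorem \ref{Th1}: since $l+h\geq n>m$, the recursion \eqref{RSys} at any index $k<m$ couples only coefficients with indices $<l$, for which $v_i\equiv H_i(X,Y)$ and $f_j\equiv-\omega\partial_\varphi H_j$ in every term. The antisymmetric cancellation exhibited in \eqref{zn1}--\eqref{zn2} then gives $Z_k\equiv 0$, and the orbital integral hypothesis on $F_k$ forces $\langle f_k\rangle\equiv 0$, hence $\Lambda_k\equiv 0$. The same cancellation still gives $Z_m\equiv 0$, so that $\Lambda_m(E)=\omega(E)\langle F_m(X,Y)\partial_\varphi X\rangle$. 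Plugging in the leading-order expansions $X=-\sqrt{2E}\cos\varphi+\mathcal O(E)$, $Y=\sqrt{2E}\sin\varphi+\mathcal O(E)$, $\partial_\varphi X=\sqrt{2E}\sin\varphi+\mathcal O(E)$ and the prescribed form of $F_m$, together with $\langle\sin^2\varphi\cos^2\varphi\rangle=1/8$ and $\langle\sin^4\varphi\rangle=3/8$, a direct computation yields
\begin{gather*}
\Lambda_m(E)=\frac{\alpha_m+3\beta_m}{2}E^2+\mathcal O(E^3), \quad E\to 0,
\end{gather*}
so $s=2$ and $\gamma_{m,2}=(\alpha_m+3\beta_m)/2$. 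The leading behaviour $\Lambda_n(v)=\lambda_n v+\mathcal O(v^2)$ is obtained verbatim from the final computation in Theorem \ref{Th1}, since the hypothesis on $F_n$ is identical.

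It remains to verify $\Lambda_j(v)=\mathcal O(v^2)$ for $m<j<n$. Here I would track orders: each $v_i$, $f_i$, $g_i$ vanishes at $E=0$ and is therefore $\mathcal O(E)$, and $\partial_E\Lambda_m=\mathcal O(E)$ because $\Lambda_m$ itself starts at order $E^2$. Consequently every product entering $\langle Z_j\rangle$ is at least $\mathcal O(E^2)$, while $\langle f_j\rangle=\omega\langle F_j\partial_\varphi X\rangle=\mathcal O(E^2)$ by \eqref{cond02}. With all three order conditions verified, Lemma \ref{Lem02} applies and the theorem follows. The main technical obstacle is precisely this last bookkeeping step: one must rule out any stray linear-in-$E$ contamination of $\Lambda_j$ for $m<j<n$ coming from the now-nonzero $\Lambda_m$ through the recursive structure of $Z_j$, and the decisive point is the quadratic vanishing $\Lambda_m(E)=\mathcal O(E^2)$, which ensures that every $v_i\,\partial_E^p\Lambda_m$-type term is at least quadratic in $E$.
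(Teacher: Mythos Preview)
Your proposal is correct and follows essentially the same route as the paper: reduce to Lemma~\ref{Lem02} by reusing the Lyapunov construction and cancellations of Theorem~\ref{Th1}, compute $\Lambda_m(E)=\tfrac{\alpha_m+3\beta_m}{2}E^2+\mathcal O(E^3)$ from the leading expansions of $X,Y$, and check the order conditions on the remaining $\Lambda_j$. One small point of imprecision: the computation of $\Lambda_n$ is not quite ``verbatim'' from Theorem~\ref{Th1}, because once $\Lambda_m\not\equiv 0$ the recursion for $Z_n$ acquires, in the borderline case $m+h=n$, an extra summand $Z_n^3=\partial_E\Lambda_m(E)\,v_h(E,\varphi)$ absent in Theorem~\ref{Th1}; the paper isolates this term explicitly. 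Your own order-tracking argument (the observation that $\partial_E\Lambda_m=\mathcal O(E)$, hence every $v_i\,\partial_E^p\Lambda_m$ contribution is $\mathcal O(E^2)$) already disposes of it, so the conclusion $\Lambda_n(E)=\lambda_n E+\mathcal O(E^2)$ stands---just be aware that this term is the reason the paper singles out $Z_n^3$, and it belongs to the $k=n$ step rather than to the intermediate range $m<j<n$.
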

\begin{proof}
The proof is similar to the proof of Theorem~\ref{Th1}. In this case, we show that  Lemma~\ref{Lem02} is applicable with $s=2$ and $\gamma_{m,s}=(\alpha_m+3\beta_m)/2$. Note that the change of the variables based on \eqref{VFm-l} leads to equation \eqref{VEq} with $\Lambda_k(E)\equiv 0$ for $ k< m$,
\begin{gather*}
\Lambda_i(E)\equiv \omega(E)\langle F_i \partial_\varphi X \rangle =\mathcal O(E^2), \quad E\to 0, \quad m\leq i\leq n-1,\\
\Lambda_m(E) =  4 E^2 \Big\langle \sin^2\varphi (\alpha_m \cos^2\varphi +\beta_m \sin^2\varphi)\Big\rangle +\mathcal O(E^3)=\gamma_{m,2}E^2 +\mathcal O(E^3).
\end{gather*}
The functions $\{\Lambda_i(E)\}_{i=m}^{n-1}$ are used in the calculating of $Z_j(E,\varphi)$, $j\geq m+h\geq n$. If $j=m+h=n$, then $Z_n(E,\varphi)=Z_n^1(E,\varphi)+Z_n^2(E,\varphi)+Z_n^3(E,\varphi)$, where $Z_n^{1}(E,\varphi)$ and $Z_n^{2}(E,\varphi)$ are defined by formulas \eqref{zn1} and \eqref{zn2},  $Z_n^3(E,\varphi)= \partial_E\Lambda_m(E) v_h(E,\varphi)=\mathcal O(E^2)$ as $E\to 0$. Therefore, $\Lambda_n(E)=\lambda_n E+\mathcal O(E^2)$ as $E\to 0$.
This means that system \eqref{FulSys} satisfies the conditions of Lemma~\ref{Lem02}.
\end{proof}

\begin{Cor}\label{CorTh2}
Under the conditions of Theorem~\ref{Th2}, Lemma~\ref{Lem2} is applicable to system \eqref{FulSys} with $s=2$ and $\gamma_{m,s}=(\alpha_m+3\beta_m)/2$.
\end{Cor}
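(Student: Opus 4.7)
The plan is to deduce the corollary directly from what has already been established inside the proof of Theorem~\ref{Th2}. The change of variables constructed there, via the Lyapunov function \eqref{VFm-l}, reduces \eqref{FulSys} to an equation of the form \eqref{VEq} whose asymptotic coefficients $\Lambda_k(v)$ satisfy the structural hypotheses of Lemma~\ref{Lem2} verbatim, so it only remains to read off four statements about these coefficients.

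First I would collect from the proof of Theorem~\ref{Th2} the following items. (a) $\Lambda_k(v)\equiv 0$ for $1\le k<m$: this uses \eqref{cond00} for $1\le k<l$ and, for $l\le k<m$, the vanishing of the line integrals hypothesised in the analogue of \eqref{cond01}, combined with the resonance-cancellation identity for $Z_k$ already exhibited in Theorem~\ref{Th1}. (b) $\Lambda_m(v)=\frac{\alpha_m+3\beta_m}{2}\,v^{2}+\mathcal O(v^{3})$, from the explicit trigonometric averaging
\[
\Lambda_m(E)=4E^{2}\bigl\langle\sin^{2}\varphi\,(\alpha_m\cos^{2}\varphi+\beta_m\sin^{2}\varphi)\bigr\rangle+\mathcal O(E^{5/2})=\tfrac{\alpha_m+3\beta_m}{2}\,E^{2}+\mathcal O(E^{3}),
\]
using $X\sim-\sqrt{2E}\cos\varphi$, $Y\sim\sqrt{2E}\sin\varphi$, $\partial_{\varphi}X\sim\sqrt{2E}\sin\varphi$ together with the leading cubic form of $F_m$. (c) $\Lambda_j(v)=\mathcal O(v^{2})$ for $m<j<n$: here $F_j=\mathcal O(r^{2})$ by \eqref{cond02}, so a priori $F_j\partial_\varphi X=\mathcal O(E^{3/2})$, but the would-be $\mathcal O(E^{3/2})$ contribution in $\langle F_j\,\partial_\varphi X\rangle$ involves only the integrals $\langle\sin\varphi\cos^{2}\varphi\rangle$, $\langle\sin^{2}\varphi\cos\varphi\rangle$, $\langle\sin^{3}\varphi\rangle$, all of which vanish, leaving $\Lambda_j=\mathcal O(E^{2})$. (d) $\Lambda_n(v)=\lambda_n v+\mathcal O(v^{2})$, exactly as in Theorem~\ref{Th1}, since the leading linear contribution of $F_n=\lambda_n y+\mathcal O(r^{2})$ to $\omega(E)\langle F_n\,\partial_\varphi X\rangle$ is $\lambda_n E$, and the correction $Z_n(E,\varphi)=\mathcal O(E^{3/2})$ computed in Theorem~\ref{Th2} does not disturb the leading term after averaging.

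Items (a)--(d) are precisely the hypotheses of Lemma~\ref{Lem02}, and therefore of Lemma~\ref{Lem2}, with $s=2$ and $\gamma_{m,s}=(\alpha_m+3\beta_m)/2$ (assumed nonzero together with $\lambda_n\neq 0$). Applying Lemma~\ref{Lem2} to the equation satisfied by $v(t)$ yields the exponential, polynomial, or neutral stability regime appropriate to whichever of the four cases $m<n<q$, $m<n=q$, $m<q<n$, $q\le m<n$ one is in. Transferring back to $(x,y)$ via the two-sided bound \eqref{Vest} and the identity $E=r^{2}/2+\mathcal O(r^{3})$ from \eqref{H0} then yields the corresponding statement for the equilibrium $(0,0)$ of \eqref{FulSys}.

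I do not anticipate any genuine obstacle: the substantive computations (construction of \eqref{VFm-l}, Hamiltonian cancellations in the $Z_k$, bounding of $Z_n=Z_n^1+Z_n^2+Z_n^3$, and the averaging that produces $\gamma_{m,2}$) were already carried out in the proof of Theorem~\ref{Th2}. The only point that deserves explicit mention, and which I would make carefully, is the cancellation of the half-integer order $\mathcal O(E^{3/2})$ contributions in $\Lambda_j$ for $m<j<n$; this is what upgrades the crude bound $\Lambda_j=\mathcal O(E^{3/2})$ to the $\mathcal O(E^{2})=\mathcal O(v^{s})$ required by the hypotheses of Lemma~\ref{Lem2}.
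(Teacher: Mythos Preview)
Your proposal is correct and matches the paper's approach: the paper gives no separate proof of this corollary, since the proof of Theorem~\ref{Th2} already verifies precisely the hypotheses of Lemma~\ref{Lem02} (and hence of Lemma~\ref{Lem2}) with $s=2$ and $\gamma_{m,2}=(\alpha_m+3\beta_m)/2$. Your item-by-item verification (a)--(d) simply makes explicit what the paper leaves implicit; in particular, your remark about the vanishing of the odd trigonometric averages in item~(c) supplies a justification for the bound $\Lambda_j(E)=\mathcal O(E^2)$, $m\le j<n$, that the paper asserts without comment.
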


{\bf Example 2.}
The system
\begin{gather}\label{ex2}
\frac{dx}{dt}=y, \quad \frac{dy}{dt}=-\sin x +t^{-\frac mq} \alpha_m x^2 y +t^{-\frac nq}\lambda_n y, \quad t\geq 1,
\end{gather}
where $\alpha_m, \lambda_n={\hbox{\rm const}}$, $0<m<n$, demonstrates the inefficiency of linear stability analysis for non-autonomous systems of the form \eqref{FulSys}.
The matrix of the linearized system $x'=y$, $y'=-x+\lambda_n t^{-n/q} y$ has the following eigenvalues:
\begin{gather*}
\mu_\pm(t) = \frac{1}{2}\Big(\lambda_n t^{-\frac nq}\pm i \sqrt{4-\lambda_n^2 t^{-\frac{2n}{q}}}\Big).
\end{gather*}
Let $\lambda_n>0$, then $\Re \mu_+(t)>0$ for all $t\geq 1$. From Theorem~\ref{Th1} it follows that the equilibrium $(0,0)$ is unstable in the linearized system. On the other hand, system \eqref{ex2} satisfies the conditions of Theorem~\ref{Th2} with $l=m$ and $F_m(x,y)=\alpha_m x^2 y$. From Corollary~\ref{CorTh2} it follows that the equilibrium is stable if $\lambda_n>0$ and $\alpha_m<0$ (see Fig.~\ref{Fig2}).
\begin{figure}
\centering
\subfigure[$\alpha_m=0$, $\lambda_n=0.4$ ]{\includegraphics[width=0.4\linewidth]{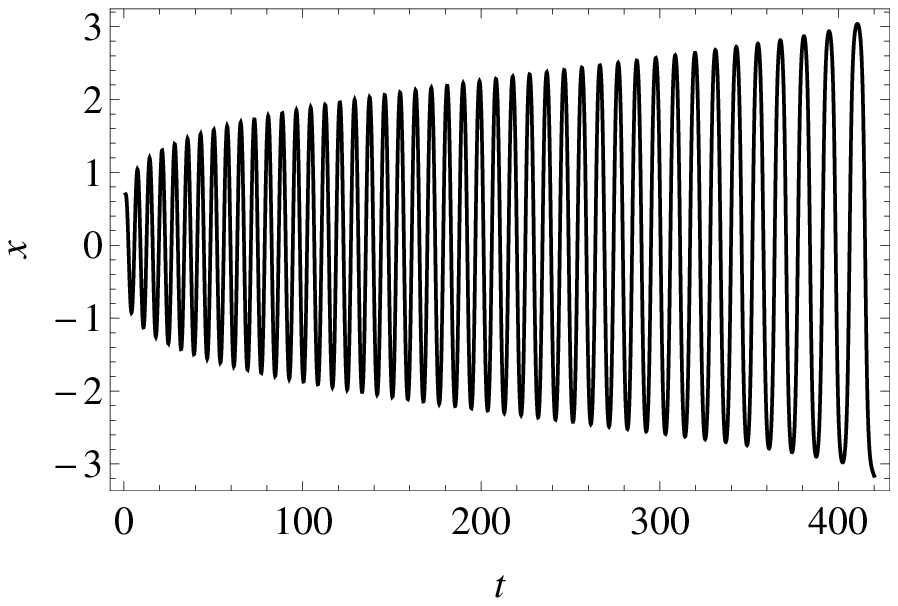}}
\hspace{4ex}
\subfigure[$\alpha_m=-2$, $\lambda_n=0.4$]{\includegraphics[width=0.4\linewidth]{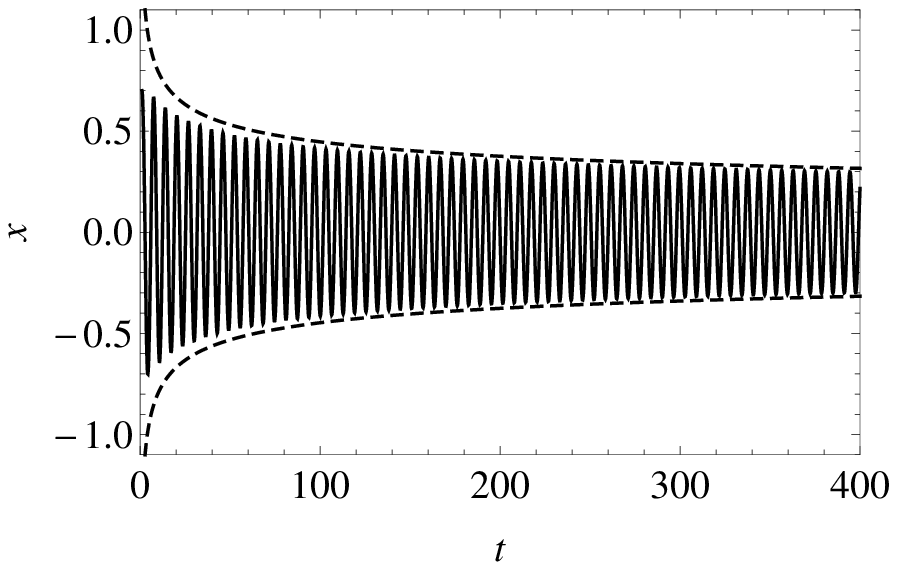}}
\caption{\small The evolution of $x(t)$ for solutions of \eqref{ex2} with $q=2$, $m=1$ and $n=2$. The dashed lines correspond to $\pm t^{\nu/2}$, $  \nu=\ (n-m)/q=0.5$.} \label{Fig2}
\end{figure}

{\bf Example 3.}
The system
\begin{gather}\label{ex3}
\frac{dx}{dt}=y, \quad \frac{dy}{dt}=-\sin x +t^{-\frac mq} \delta_m x^4 y + t^{-\frac nq} \alpha_n x^2 y, \quad t\geq 1
\end{gather}
has the form of \eqref{FulSys} with $H_0(x,y)=1-\cos x+y^2/2$, $F_m(x,y)=\delta_m x^4 y$, $F_n(x,y)=\alpha_n x^2 y$, $H_i(x,y)\equiv 0$ for $i\neq 0$, and $F_j(x,y)\equiv 0$ for $j\not\in\{ m,n\}$. It can easily be checked that
\begin{align*}
&Z_{k}(E,\varphi)\equiv 0, \quad \Lambda_{k}(E)\equiv 0, \quad k<m,\\
&Z_m(E,\varphi)\equiv 0,  \quad \Lambda_m(E)=\oint\limits_{H_0(x,y)=E} F_{m}(x,y) \partial_y H_0(x,y)\, dl  =  \gamma_{m,3} E^3 (1+\mathcal O(E)), \\
&Z_{j}(E,\varphi)=\mathcal O(E^3), \quad \Lambda_{j}(E)=\mathcal O(E^3), \quad m\leq j< n,\\
&Z_n(E,\varphi)=\mathcal O(E^3), \quad \Lambda_n(E)=\oint\limits_{H_0(x,y)=E} F_{n}(x,y) \partial_y H_0(x,y)\, dl - \langle Z_n(E,\varphi)\rangle = \gamma_{n,2} E^2 (1+\mathcal O(E)),
\end{align*}
as $E\to 0$, where $\gamma_{m,3}=\delta_m/2$ and $\gamma_{n,2}=\alpha_n/2$.
Therefore, system \eqref{ex3} satisfy the condition of Lemma~\ref{Lem22} with $s=3$ and $d=2$.
This implies that the equilibrium $(0,0)$ is stable if $\gamma_{m,3}<0$ and $\gamma_{n,2}<0$ (see Fig.~\ref{Fig3}).

\begin{figure}
\centering
\subfigure[$\delta_m=-0.3$, $\alpha_n=-0.3$]{\includegraphics[width=0.4\linewidth]{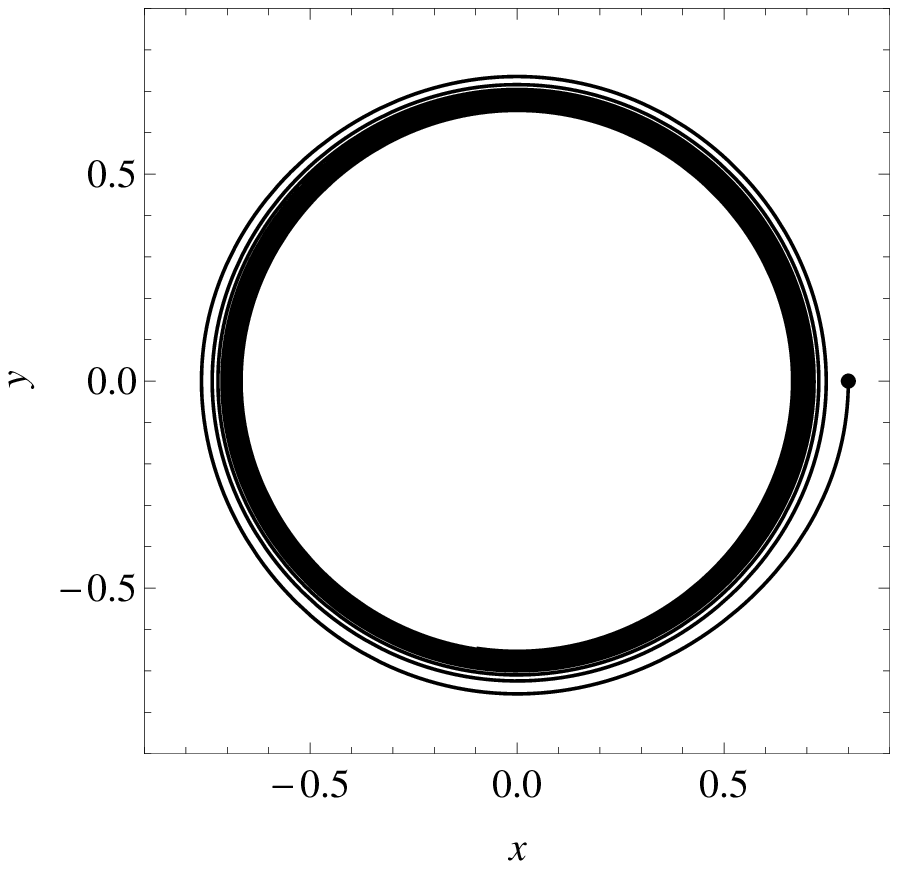}}
\hspace{4ex}
\subfigure[$\delta_m=0.3$, $\alpha_n=0.3$]{\includegraphics[width=0.4\linewidth]{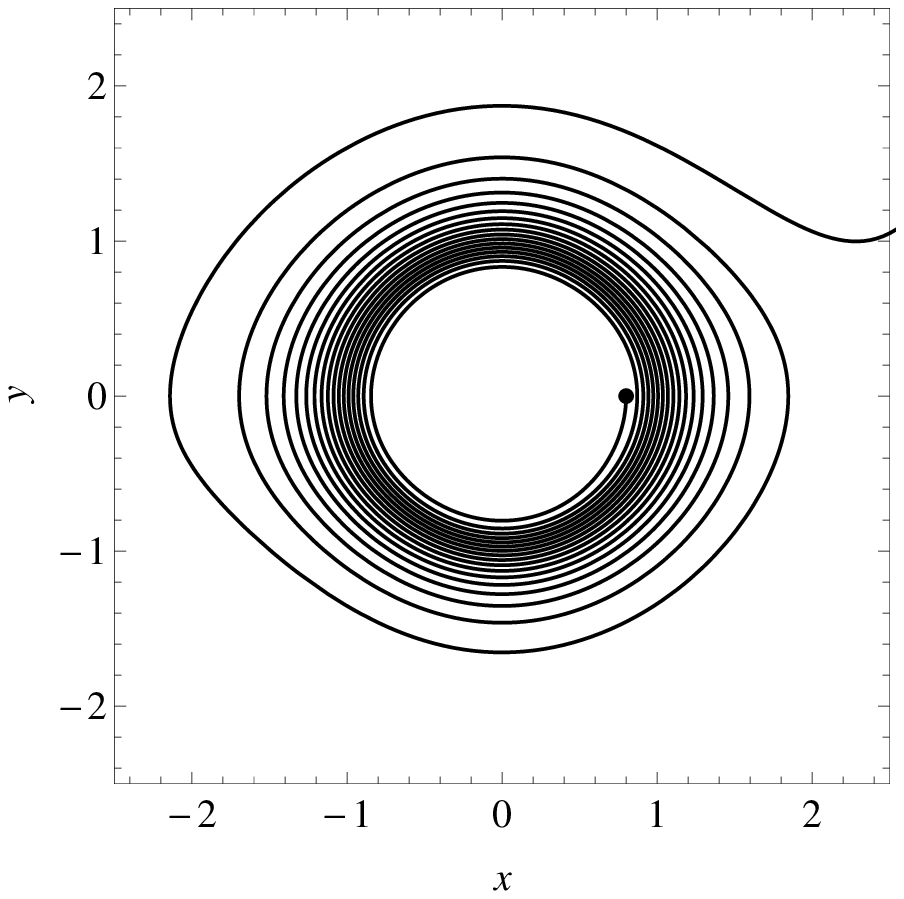}}
\caption{\small The evolution of $(x(t),y(t))$ for solutions of \eqref{ex3} with $q=2$, $m=1$ and $n=2$. The black points correspond to initial data.} \label{Fig3}
\end{figure}

\begin{Th}
\label{Th3}
Let $n$ be a positive integer such that $1\leq n\leq q$ and the perturbations \eqref{HF} satisfy the following conditions:
\begin{gather}\label{cond3}
    F_i(x,y)\equiv 0, \quad 1\leq i<n; \quad F_{n}(x,y)\equiv  \partial_y H_0(x,y) \big(\lambda_n-\mu_n H_0(x,y)\big)+\hat F_n(x,y),
\end{gather}
where $\lambda_n,\mu_n={\hbox{\rm const}}\neq 0$ and $\hat F_n(x,y)$ satisfies \eqref{cond01}. Then the equilibrium $(0,0)$ of system \eqref{FulSys} is unstable if $\lambda_n>0$ and is asymptotically stable if $\lambda_n<0$. Moreover, if $\lambda_n>0$, $\mu_n>0$ and $|\lambda_n/\mu_n|< E_0$, there exists stable limit cycle $H_0(x,y)=\lambda_n/\mu_n$.
\end{Th}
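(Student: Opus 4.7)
The plan is to apply Lemma~\ref{Lem1} for the dichotomy of stability of the equilibrium and Lemma~\ref{Lem3} for the limit cycle, after deriving the explicit factorization
\[
\Lambda_n(E) = (\lambda_n - \mu_n E)\,G(E), \qquad G(E) := \bigl\langle \bigl(\partial_y H_0(X(\varphi,E),Y(\varphi,E))\bigr)^2 \bigr\rangle_\varphi,
\]
valid on $(0, E_0]$, with $G$ strictly positive.

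First I would verify that $\Lambda_k(E) \equiv 0$ for $1 \leq k < n$. Because $F_k \equiv 0$ in this range, the formulas \eqref{fg} reduce to $f_k = -\omega(E)\partial_\varphi H_k(X,Y)$ and $g_k = \omega(E)\partial_E H_k(X,Y)$, whose $\varphi$-averages vanish, so the choice $v_k(E,\varphi) = H_k(X(\varphi,E),Y(\varphi,E))$ solves \eqref{RSys} with $\Lambda_k \equiv 0$, exactly as in the purely Hamiltonian range treated in the proof of Theorem~\ref{Th1}. Propagating this choice inductively, the functions $Z_k$ for $k \leq n$ reduce to symmetric sums of Poisson brackets $\partial_E H_i \partial_\varphi H_j - \partial_\varphi H_i \partial_E H_j$ over pairs $i + j = k$, which cancel identically under the swap $i \leftrightarrow j$. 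The $\partial_E^m \Lambda_j$ contributions in the general expression for $Z_k$ vanish since $\Lambda_j \equiv 0$ for $j < n$, and the term $v_{n-q}$ vanishes because $n \leq q$; hence $Z_n \equiv 0$.

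To compute $\Lambda_n(E)$, I would use the identity $\omega(E)\partial_\varphi X = \partial_y H_0(X,Y)$ to rewrite $\langle f_n \rangle_\varphi = \langle F_n(X,Y)\,\partial_y H_0(X,Y) \rangle_\varphi$. Substituting the hypothesis on $F_n$ and invoking $H_0(X,Y) \equiv E$, the principal piece contributes $(\lambda_n - \mu_n E)\,G(E)$, while $\langle \hat F_n \partial_y H_0 \rangle = 0$ by the assumed version of \eqref{cond01}; combined with $Z_n \equiv 0$ this yields the displayed factorization. Positivity of $G$ on $(0, E_0]$ follows because $\partial_y H_0$ cannot vanish identically along the nontrivial closed orbit $\{H_0 = E\}$, and the local expansion $X = -\sqrt{2E}\cos\varphi + \mathcal O(E)$, $Y = \sqrt{2E}\sin\varphi + \mathcal O(E)$ gives $G(E) = E + o(E)$ as $E \to 0$, so that $\Lambda_n(v) = \lambda_n v + o(v)$. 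The main technical obstacle is the Poisson-bracket bookkeeping in the previous paragraph: one must verify carefully that the oscillatory terms generated by arbitrary Hamiltonian perturbations $H_1, \dots, H_{n-1}$ do not contaminate $\Lambda_n$, and this is the combinatorial heart of the argument.

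The two stated conclusions then follow from the general framework. Lemma~\ref{Lem1}, applied with linear coefficient $\lambda_n \neq 0$, yields instability when $\lambda_n > 0$ and asymptotic stability (exponential if $n < q$, polynomial if $n = q$) when $\lambda_n < 0$. For the limit cycle, assume $\lambda_n > 0$, $\mu_n > 0$ and $V_c := \lambda_n/\mu_n \in (0, E_0)$; the factorization gives $\Lambda_n(V_c) = 0$ and $\Lambda_n'(V_c) = -\mu_n G(V_c) < 0$, and because $1 \leq n \leq q$ the hypotheses of Lemma~\ref{Lem3} are satisfied, producing the stable limit cycle on the level set $\{(x,y) : H_0(x,y) = V_c\}$.
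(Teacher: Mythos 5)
Your proposal is correct and follows essentially the same route as the paper: it reduces the system via the averaging transformation to obtain the factorization $\Lambda_k\equiv 0$ for $k<n$ and $\Lambda_n(E)=(\lambda_n-\mu_n E)\langle(\partial_y H_0)^2\rangle$, then invokes Lemma~\ref{Lem1} for the stability dichotomy and Lemma~\ref{Lem3} for the stable cycle at $E_c=\lambda_n/\mu_n$. You supply more of the bookkeeping (the cancellation of the Poisson-bracket sums in $Z_k$ and the positivity of $\langle(\partial_y H_0)^2\rangle$) than the paper's terse proof, but the argument is the same.
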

\begin{proof}
The proof is based on the application of Lemma~\ref{Lem1} and Lemma~\ref{Lem3}. From \eqref{cond3} it follows that there exists the transformation $(x,y)\mapsto(E,\varphi)\mapsto (v,\varphi)$ with
\begin{gather*}
V(E,\varphi,t)=E+\sum_{k=1}^n t^{-\frac k q}v_k(E,\varphi),
\end{gather*}
which reduces  system \eqref{FulSys} to the following form:
\begin{gather*}
\frac{dv}{dt}=t^{-\frac nq}\Lambda_n(v)+R_{n+1}(v,\varphi,t), \quad \frac{d\varphi}{dt}=\omega(v)+G(v,\varphi,t),\\
R_{n+1}(v,\varphi,t)=\mathcal O(t^{-\frac{n+1}{q}}), \quad G(v,\varphi,t)=\mathcal O(t^{-\frac 1q}), \quad t\to\infty, \\
\Lambda_n(E)\equiv \omega(E)\langle F_n(X(\varphi,E),Y(\varphi,E)) \partial_\varphi X(\varphi,E) \rangle\equiv  (\lambda_n-\mu_n E) \langle \big(\partial_y H_0\big)^2\rangle.
\end{gather*}
Note that  $\Lambda_n(E)=\lambda_n E +\mathcal O(E^2)$ as $E\to 0$. Hence, the equilibrium is asymptotically stable if $\lambda_n<0$ and the equilibrium is unstable if $\lambda_n>0$.

If $\lambda_n>0$ and $\mu_n>0$, the equation $\Lambda_n(E)=0$ has the root $E_c=\lambda_n/\mu_n$ such that $\Lambda_n'(E_c)<0$. In this case, from  Lemma~\ref{Lem3} it follows that there is a stable limit cycle $H_0(x,y)=E_c$.
\end{proof}

\begin{Rem}
If $\lambda_n<0$, $\gamma_n<0$ and $|\lambda_n/\mu_n|< E_0$, the fixed point $(0,0)$ is asymptotically stable and the limit cycle $H_0(x,y)=E_c$ with $E_c=\lambda_n/\mu_n$ is instable. If $E_c$ is the minimal nonzero root of $\Lambda_n(E)$, then $\{(x,y): 0 \leq H_0(x,y)<E_c\}$ is the domain of attraction of the fixed point $(0,0)$ .
\end{Rem}

{\bf Example 4.}
The system
\begin{gather}
\label{ex4}
\frac{dx}{dt}=y, \quad \frac{dy}{dt}=-x +t^{-\frac nq} y\Big(\lambda_n+\kappa_n x-\mu_n \frac{(x^2 + y^2)}{2}\Big), \quad t\geq 1
\end{gather}
with $\lambda_n,\mu_n,\kappa_n={\hbox{\rm const}}$ satisfies the conditions of Theorem~\ref{Th3} with $H_0(x,y)=(x^2+y^2)/2$ and $\hat F_n(x,y)=\kappa_n xy$. Therefore, if $\lambda_n>0$ and $\mu_n>0$, the system has the stable limit cycle $H_0(x,y)=\lambda_n/\mu_n$ (see Fig.~\ref{Fig4}).
\begin{figure}
\centering
\subfigure[$\lambda_n=\mu_n=0.5$]{\includegraphics[width=0.4\linewidth]{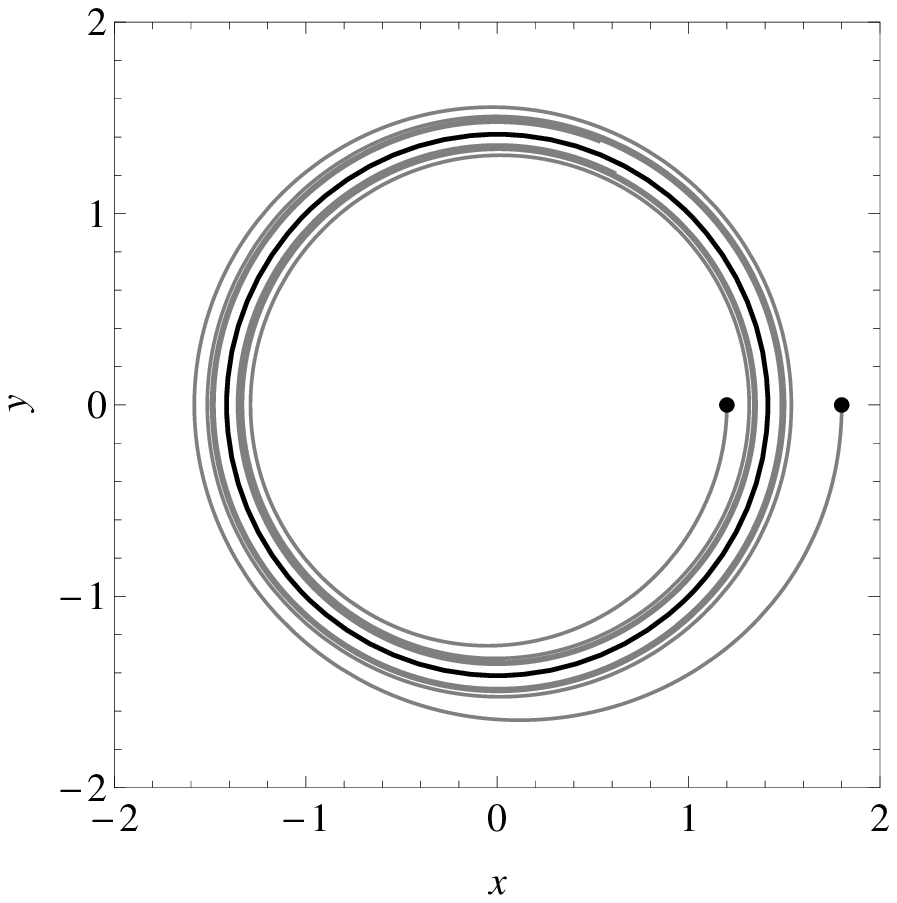}}
\hspace{4ex}
\subfigure[$\lambda_n=\mu_n=-0.5$]{\includegraphics[width=0.4\linewidth]{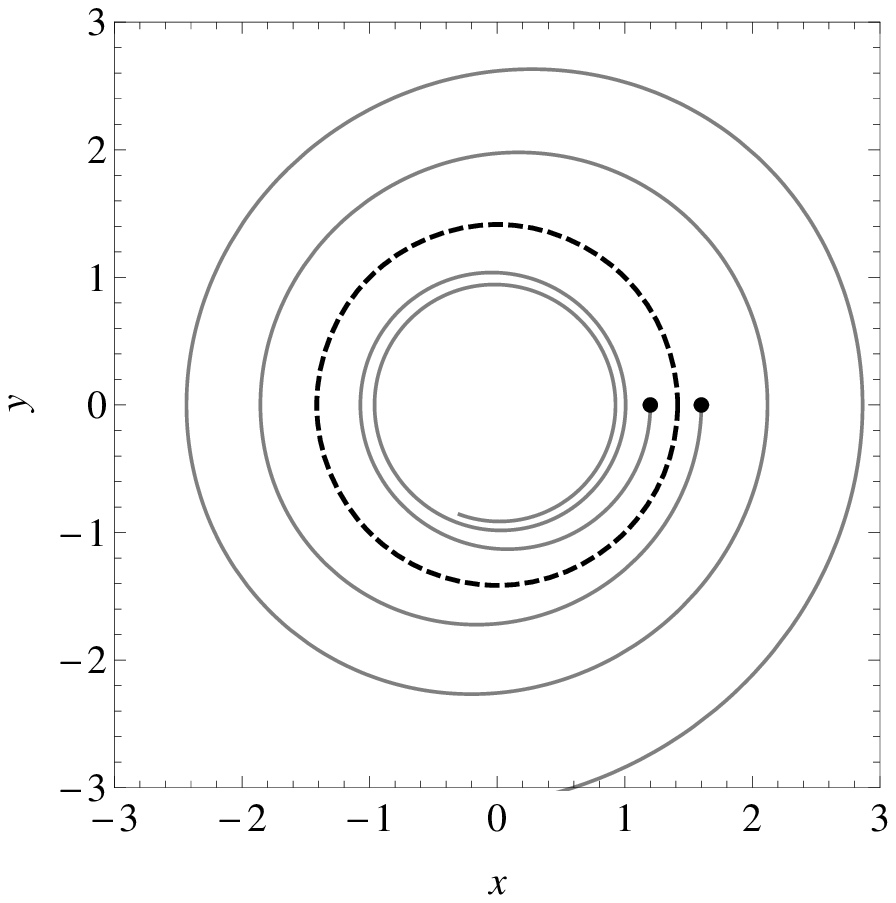}}
\caption{\small The evolution of $(x(t),y(t))$ for solutions of \eqref{ex4} with $n/q=1$ and $\kappa_n=0$ as $t\geq 1$. The black points correspond to initial data. The black solid (dashed) line corresponds to the stable (unstable) limit circle $H_0(x,y)=1$.} \label{Fig4}
\end{figure}

\section{Conclusion}
Thus, we have described possible bifurcations in asymptotically autonomous Hamiltonian systems in the plane. The important feature of non-autonomous systems is the inefficiency of the linear stability analysis: there are examples of nonlinear systems whose solutions behave completely differently than the solutions of corresponding linearized system.
Here, through a careful nonlinear analysis based on the Lyapunov function method we have shown that depending on the structure of decaying perturbations the equilibrium of the limiting system can preserve or lose stability. Note also that in this paper perturbations preserving the equilibrium of a Hamiltonian system have been considered. If the equilibrium disappears in the perturbed equations, instead of the equilibrium, a particular solution of a perturbed system should be considered.

\section*{Acknowledgments}
The research is supported by the Russian Science Foundation (Grant No. 20-11-19995).

}
\end{document}